\newcommand{\ba}{\begin{aligned}}
\newcommand{\ea}{\end{aligned}}
\newcommand{\be}{\begin{equation}}
\newcommand{\ee}{\end{equation}}
\newcommand{\bn}{\begin{eqnarray}}
\newcommand{\en}{\end{eqnarray}}
\newcommand{\bnn}{\begin{eqnarray*}}
\newcommand{\enn}{\end{eqnarray*}}
\newcommand{\ti}{\tilde}
\newcommand{\lm}{\lambda}
\renewcommand{\div}{ {\rm div }  }
\renewcommand{\O}{\Omega }
\def\rr{\mathbb{R}^3}
\newcommand{\na}{\nabla }
\newcommand{\pa}{\partial}
\newcommand{\bi}{\bibitem}
\newcommand{\bl}{\begin{lemma}}
\newcommand{\el}{\end{lemma}}
\newcommand{\et}{\end{theorem}}
\newcommand{\te}{\theta}
\newcommand{\ve}{\varepsilon}
\newcommand{\la}{\label}
\newcommand{\ka}{\kappa}
\newcommand{\om}{\Omega}
\newcommand{\ep}{\varepsilon}
\newcommand{\n}{\rho}
\def\la{\label}
\def\na{\nabla}
 \def\p{\partial}
\def\norm[#1]#2{\|#2\|_{#1}}
\def\nm[#1]#2{\|#2\|_{#1}}
\numberwithin{equation}{section}
\newtheorem{claim}{\bf \t}[part]
\newtheorem{theorem}{Theorem}[section]
\newtheorem{lemma}[theorem]{Lemma}
\newtheorem{remark}{Remark}[section]
\newtheorem{definition}{Definition}[section]
\def\t{\theta}
\def\m{\mu}
\def\l{\lambda}
\def\rr{\mathbb{R}^3}
\def\r{\rho}
\def\f{\frac}
 \title{  On formation of singularity for non-isentropic Navier-Stokes equations without heat-conductivity }
\author{Xiangdi H{\small UANG}$^{a}$,  Zhouping X{\small IN}$^{b}$  \\[3mm] {\normalsize $^a$ NCMIS, AMSS,} \\
{\normalsize Chinese Academy of Sciences, Beijing 100190,P. R. China} \\[2mm]
{\normalsize $^b$ The Institute of Mathematical Sciences,} \\
{\normalsize  The Chinese University of Hong Kong, Hong Kong,
P. R. China}
 }
\date{}
\begin{document}
 \maketitle

\begin{abstract}
It is known that smooth solutions to the non-isentropic Navier-Stokes equations without heat-conductivity may lose their regularities in finite time in the presence of vacuum. However, in spite of the recent progress on such blowup phenomenon, it remain to give a possible blowup mechanism. In this paper, we present a simple continuation principle for such system, which asserts that the concentration of the density or the temperature occurs in finite time for a large class of smooth initial data, which is responsible for the breakdown of classical solutions. It also give an affirmative answer to a strong version of conjecture proposed by J.Nash in 1950s.

\

Keywords: compressible Navier-Stokes system, continuation principle, vacuum.

\

AMS: 35Q35, 35B65, 76N10
\end{abstract}

\section{Introduction}

In this paper, we consider the system of partial differential
equations for the three-dimensional  compressible, and non-isentropic Navier-Stokes equations
in the Eulerian coordinates
 \be  \label{a1}
\begin{cases}
\r_{t}+\mbox{div}(\r u)=0,\\
(\r u)_{t}+\mbox{div}(\r u\otimes
u)-\mu\Delta{u}-(\mu+\l)\nabla\mbox{div}u
+\nabla{P}= 0,\\
c_v[(\r \t)_{t}+\mbox{div}(\r u\t
)]-\kappa\triangle\theta+P\mbox{div}u=2\mu|\mathfrak{D}(u)|^2+\l(\mbox{div}u)^2,
\end{cases}
\ee
where $t\ge 0$ is time, $x\in \Omega\subset \rr$ is the spatial coordinate, and
 $\n,  u=\left(u_1,u_2,u_3\right)^{\rm tr},  $ $\te, $ $P=R\r\t \,(R>0), $
represent respectively the fluid density,  velocity, absolute temperature, pressure; $ \mathfrak{D}(u)$ is the
deformation tensor given by  \bnn \la{z1.2}   \mathfrak{D}(u)  =
\frac{1}{2}(\nabla u + (\nabla u)^{\rm tr}). \enn
 The constant
viscosity coefficients $\mu$ and $\lambda$  satisfy the physical
restrictions
 \be\la{a2} \mu>0,\quad 2 \mu + 3\lambda\ge 0.
\ee Positive constants $c_v,$  $\ka,$ and $\nu$ are respectively  the heat capacity,  the ratio of the heat conductivity coefficient over the heat capacity.

The compressible Navier-Stokes system \eqref{a1} consists of  a set of equations describing compressible viscous heat-conducting flows. Indeed, the equations \eqref{a1}$_1$, \eqref{a1}$_2,$ and \eqref{a1}$_3$
  respectively  describe the conservation of mass, momentum, and energy.

There is a considerable body of literature on the multi-dimensional compressible Navier-Stokes system \eqref{a1} by physicists and mathematicians
because of its physical importance, complexity, rich phenomena, and mathematical
challenges; see \cite{choe1,feireisl1,Hof1,hulx,hlx4,L1,M1,Na,R,se1,X1} and the references cited therein. However, many physically important and mathematically fundamental
problems are still open due to the lack of smoothing mechanism and the strong nonlinearity. For example, although
  the local strong solutions to the compressible
 Navier-Stokes system \eqref{a1} for general initial data with nonnegative density were respectively obtained by \cite{choe1},
whether the unique local strong solution   can exist globally in time is an outstanding challenging open problem in contrast to the isentropic case [13].

  In the presence of vacuum, as pointed out by Xin\cite{X1}, non-isentropic Navier-Stokes equations without heat-conductivity will develop finite time singularity, see also reference \cite{choe}. Indeed, very recently, Xin-Yan\cite{XY} further proved that any classical solutions of viscous compressible fluids with or without heat conduction will
blow up in finite time, as long as the initial data has an isolated mass group. Their results hold for the whole space and bounded domains, yet the blowup mechanism is not clarified. It is the main purpose if this paper to resolve this key issue. Theorem \ref{thm1.2} reveals that the concentration of the density or the temperature must be responsible for the loss of regularity in finite time.

  Although vacuum will lead to breakdown of smooth solutions in finite time , it is also important to study  the mechanism of blowup and structure of possible singularities of general strong (or smooth) solutions to the compressible  Navier-Stokes system.

The pioneering work can be traced   to Serrin's criterion \cite{se2}  on the Leray-Hopf weak solutions to the three-dimensional incompressible Navier-Stokes
equations, which can be stated that if a weak solution $u$ satisfies
\begin{equation}\label{1.6} u\in L^s(0,T;L^r),\quad
\frac{2}{s}+\frac{3}{r}\leq 1,\quad 3<r\leq \infty,
\end{equation}
then it is regular.

 Recently, Huang-Li-Xin \cite{hlx}  extended  the Serrin's  criterion  \eqref{1.6}
to the barotropic compressible Navier-Stokes
equations  and  showed  that  if $T^*<\infty$ is the maximal
time of existence of a strong (or classical) solution $(\n,u)$, then
\be\la{ba2} \lim_{T\rightarrow T^*}\left(
\|\rho\|_{L^\infty(0,T;L^{\infty})} + \| {
u}\|_{L^s(0,T;L^r)}\right) = \infty ,
 \ee    with $r$ and $s$ as in   (\ref{1.6}).  For more information  on the blowup criteria of barotropic compressible flow, we refer to
\cite{hlx,hlx1,H2,H4,wz} and the references therein.

When it comes to the full compressible Navier-Stokes system \eqref{a1}, the problem is more complicated. In\cite{Na-1}, Nash proposed a conjecture on the possible blowup of compressible heat-conductive flows. He wrote
  "This should give existence, smoothness, and continuation (in time) of flows, conditional on the non-appearance of certain gross type of singularity, such as infinities of temperature or density."

 Under the   condition that
\be\la{a6} \lambda<7\mu, \ee
  Fan-Jiang-Ou \cite{J2}  obtained   the following blowup criterion     \bnn
 \lim_{T\rightarrow T^*}(
\|{\theta}\|_{L^{\infty}(0,T;L^{\infty})} +
 \|{\nabla u}\|_{L^1(0,T;L^{\infty})})  = \infty .\enn
Recently,  under just the physical restrictions (\ref{a2}),    Huang-Li
\cite{hl} and Huang-Li-Xin \cite{hlx1}   established the
following  blowup criterion: \bnn \lim_{T\rightarrow
T^*}\left(
 \|{\theta}\|_{L^2(0,T;L^{\infty})}+\|{\mathfrak{D}( u)}
 \|_{L^1(0,T;L^{\infty})}\right)  = \infty ,\enn
where $ \mathfrak{D}(u)$ is the deformation tensor.

Later, in the absence of vacuum,
Sun-Wang-Zhang\cite{wz1} established the following blowup criterion for bounded domains  with positive heat-conductivity $\kappa>0$ that
\be \la{fns2-1}\lim_{T\rightarrow
T^*}(\norm[L^\infty(0,T;L^{\infty})]{(\n,\frac{1}{\n})} +
\norm[L^\infty(0,T;L^\infty)]{\t}) = \infty,
 \ee
  provided that  \eqref{a2} and  (\ref{a6}) hold true. As a consequence, Nash's conjecture is partially verified as \cite{wz1} can't rule out the possibility of appearance of vacuum.

     Recently, for $\kappa>0$, we\cite{HL-C} establish a blowup criterion allowing initial vacuum, which  is independent of temperature, as follows
 \be \la{fns2-2}\lim_{T\rightarrow
T^*}(\norm[L^\infty(0,T;L^{\infty})]{\n} +
\norm[L^r(0,T;L^s)]{u}) = \infty,
 \ee
 where $r,s$ satisfy \eqref{1.6}.

 As a matter of fact, the blowup criterion \eqref{fns2-2} further implies
 \be \la{fns2-3}\lim_{T\rightarrow
T^*}(\norm[L^\infty(0,T;L^{\infty})]{\n} +
\norm[L^\infty(0,T;L^{\infty})]{\theta}) = \infty.
 \ee
as long as  \eqref{a6} holds true.
 This makes Nash's conjecture as an immediately corollary for positive heat-conductivity flows. Our main goal in this paper is to give an affirmative answer to a strong version of Nash's conjecture without heat-conduction.

We will assume that $\kappa=0$, and without loss of generality, take $c_v=R=1$. The system (\ref{a1}) is reduced to
\be  \label{n2}
\begin{cases}
\r_{t}+\mbox{div}(\r u)=0,\\
(\r u)_{t}+\mbox{div}(\r u\otimes
u)-\mu\Delta{u}-(\mu+\l)\nabla\mbox{div}u
+\nabla{P}= 0,\\
P_{t}+\mbox{div}(P u
)+P\mbox{div}u=2\mu|\mathfrak{D}(u)|^2+\l(\mbox{div}u)^2.
\end{cases}
\ee

The system (\ref{n2}) is supplemented with the following initial conditions:
\be\la{a3}  (\rho,u,P)(x,0)=(\rho_0, u_0,P_0)(x),\quad x\in R^3,\ee
$(\rho,u,P)$ satisfies  the far field
condition:
\begin{equation}\label{bc1}
 (\r,u,P)(x,t)\rightarrow (0,0,0)
 ~~\mbox{as}~~|x|\rightarrow\infty;
\end{equation}

To state the main result, we will use the following notations and conventions.

{\it Notations.} For $1\le p\le \infty$ and integer $k\ge 0$, the standard homogeneous and inhomogeneous Sobolev
spaces in $R^3$ are denoted by:
   \bnn \begin{cases} L^p=L^p(R^3),\quad W^{k,p}=W^{k,p}(R^3),\quad
    D^{k,p}  =
    \left.\left\{u\in
L^1_{\rm loc}(R^3)\,\right| {\nabla^k u}\in {L^p}  \right\},\\
  D^1_0   = \left. \left\{u\in L^6 \,\right|
  {\nabla u}\in{L^2} \right\},\quad H_0^1=L^2\cap D^1_0, \quad H^k=W^{k,2}
 \end{cases}\enn
Denote by
$$\dot{f}=f_t +u\cdot\na f,\quad\int fdx=\int_{R^3}fdx.$$
The strong solutions to the
  Cauchy problem \eqref{n2}-\eqref{bc1} are defined as follows.
\begin{definition}[Strong Solutions] \label{def1.1}
$(\r,u,P)$ is called a strong solution to \eqref{n2} in
$R^3\times (0,T)$, if for some $q_0>3$,
\begin{eqnarray}\nonumber
\begin{cases}
(\r,P)\geq 0,~~ (\r,P)\in C([0,T];L^1\cap H^1\cap W^{1,q_0}),~~(\r_t,P_t)\in
C([0,T];L^{q_0}),\\
u\in C([0,T];D_0^1\cap D^{2,2})\cap L^2(0,T;D^{2,q_0}),\quad  \\
u_t\in L^2(0,T;D^{1,2}), ~~
\sqrt{\r}u_t\in L^{\infty}(0,T; L^2),
\end{cases}
\end{eqnarray}
and $(\r,u,P)$ satisfies both \eqref{a1} almost everywhere in $R^3\times
(0,T)$ and  \eqref{a3} almost everywhere in $R^3.$

\end{definition}

Then the main result in this paper can be stated as follows:

\begin{theorem}\label{thm1.2}
 For constant $\tilde{q}\in(3,6]$, assume that  $(\r_0\ge
0,u_0,P_0\ge 0)$ satisfies
\begin{equation}
\ba\la{ini-1}
  (\r_0,P_0)\in L^1\cap H^1\cap W^{1,\tilde{q}},\quad
  u_0 \in D_0^1\cap
D^{2,2}, \quad    \r_0|u_0|^2\in L^1,
\ea
\end{equation}
and the compatibility conditions:
\be \label{n1.1}
 -\m\Delta{u}_0-(\m+\l)\nabla {\rm div}u_0+\nabla P_0 =\sqrt{\r_0}g,\ee
with $g\in L^2(R^3)$.
 Let $(\r,u,P)$ be the strong solution to the
   compressible Navier-Stokes system \eqref{n2} in $R^3$.
     If $T^\ast<\infty$ is the
maximal time of existence, then

 \be \la{fns2}\lim_{T\rightarrow
T^*}(\norm[L^\infty(0,T;L^{\infty})]{\n} +
\norm[L^\infty(0,T;L^\infty)]{\t}) = \infty.
 \ee
 provided
 \be\la{mul}
 \mu>4\lambda.
 \ee

\end{theorem}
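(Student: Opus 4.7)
I would argue by contradiction and a continuation argument. Suppose
\[
\sup_{0\le t<T^*}\bigl(\|\rho(\cdot,t)\|_{L^\infty}+\|\theta(\cdot,t)\|_{L^\infty}\bigr)\le M<\infty.
\]
Since $P=\rho\theta$, this already yields $\|P\|_{L^\infty(\rr\times(0,T^*))}\le M^2$, and combined with mass and total-energy conservation it gives $\rho,P\in L^\infty(0,T^*;L^1\cap L^\infty)$ and $\sqrt\rho\,u\in L^\infty(0,T^*;L^2)$; testing $(\ref{n2})_2$ against $u$ then absorbs $\int P\,\mbox{div}\,u\,dx$ into the viscous dissipation and adds $\na u\in L^2(0,T^*;L^2)$. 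The goal is to upgrade these to the full regularity required by Definition \ref{def1.1}, uniformly on $[0,T^*)$, so that the local existence theorem applies at $t=T^*$ and contradicts the maximality of $T^*$.

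The second block of estimates imitates the barotropic scheme of \cite{hlx,hlx1}. Testing $(\ref{n2})_2$ successively by $u_t$ and by the material derivative $\dot u=u_t+u\cdot\na u$, and exploiting the effective viscous flux and the vorticity
\[
G=(2\mu+\lambda)\,\mbox{div}\,u-P,\qquad \omega=\mbox{curl}\,u,
\]
which satisfy $\Delta G=\mbox{div}(\rho\dot u)$ and $\mu\Delta\omega=\mbox{curl}(\rho\dot u)$, one controls $\|\na u\|_{L^\infty(0,T^*;L^2)}$ and $\|\sqrt\rho\,\dot u\|_{L^2(0,T^*;L^2)}$ in terms of $\|P\|_{L^\infty}$ alone. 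Standard elliptic theory then converts $L^p$ bounds on $\rho\dot u$ into $L^p$ bounds on $\na G$ and $\na\omega$, and hence on $\na u$ through the pointwise reconstruction of $\na u$ from $G$, $\omega$ and $P$.

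The main obstruction, and the step that forces the structural hypothesis (\ref{mul}), is to promote $P$ from $L^\infty$ to $L^\infty(0,T^*;L^q)$ with $q$ so large that $\na u\in L^p$ for some $p>3$ follows. I would test $(\ref{n2})_3$ by $qP^{q-1}$ and substitute $(2\mu+\lambda)\mbox{div}\,u=G+P$, reaching an identity of the schematic form
\[
\frac{d}{dt}\int P^q\,dx+\frac{q}{2\mu+\lambda}\int P^{q+1}\,dx=\int P^{q-1}\bigl[\,2\mu|\mathfrak D(u)|^2+\lambda(\mbox{div}\,u)^2-\tfrac{q}{2\mu+\lambda}GP\,\bigr]\,dx.
\]
The quadratic-in-$\na u$ contributions on the right have to be absorbed into the coercive $P^{q+1}$ term on the left after a Young-type splitting combined with the elliptic bound $\|\na u\|_{L^{2q}}\lesssim\|G\|_{L^{2q}}+\|\omega\|_{L^{2q}}+\|P\|_{L^{2q}}$. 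The algebraic balance of the coefficients of $2\mu|\mathfrak D(u)|^2$ and $\lambda(\mbox{div}\,u)^2$ after eliminating $\mbox{div}\,u$ reduces precisely to $\mu>4\lambda$, so this is the unique place where (\ref{mul}) becomes indispensable, and I expect it to be by far the most delicate step.

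Once $P$, and hence $\na u$, lies in a sufficiently high Lebesgue class, a Beale-Kato-Majda-type interpolation of $\|\na u\|_{L^\infty}$ through $\|G\|_{L^\infty}+\|\omega\|_{L^\infty}$ combined with a logarithmic Gronwall argument in the spirit of \cite{hlx1,HL-C} produces $\na u\in L^1(0,T^*;L^\infty)$. Propagating this bound through the transport equations for $\rho$ and $P$ yields $\na\rho,\na P\in L^\infty(0,T^*;L^{q_0})$, and one more round of material-derivative estimates gives $\sqrt\rho\,u_t\in L^\infty(0,T^*;L^2)$ together with $u_t\in L^2(0,T^*;D^{1,2})$. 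These bounds exhaust the regularity in Definition \ref{def1.1} and close the contradiction.
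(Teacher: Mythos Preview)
Your overall architecture (contradiction, energy ladder, effective viscous flux/vorticity, log-Gronwall for $\nabla\rho,\nabla P$) matches the paper, but you have misplaced the one genuinely new difficulty of the $\kappa=0$ case, and with it the role of the hypothesis $\mu>4\lambda$.

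The claim that testing $(\ref{n2})_2$ by $u_t$ and $\dot u$ ``controls $\|\nabla u\|_{L^\infty_tL^2_x}$ and $\|\sqrt\rho\,\dot u\|_{L^2_tL^2_x}$ in terms of $\|P\|_{L^\infty}$ alone'' is exactly where the barotropic scheme breaks down. Testing by $u_t$ produces $\int P\,\mbox{div}\,u_t\,dx=\frac{d}{dt}\int P\,\mbox{div}\,u\,dx-\int P_t\,\mbox{div}\,u\,dx$, and here $P_t$ is \emph{not} transported: by $(\ref{n2})_3$ it carries the quadratic source $2\mu|\mathfrak D(u)|^2+\lambda(\mbox{div}\,u)^2$, so you are left with a cubic term of the type $\int|\nabla u|^2\,\mbox{div}\,u\,dx$ that cannot be absorbed by $\|P\|_{L^\infty}$ and the dissipation. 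The paper's device is to rewrite $P=\rho E-\tfrac12\rho|u|^2$ with the specific energy $E=\theta+\tfrac12|u|^2$ and use that $(\rho E)_t+\mbox{div}(\rho E u)=\mbox{div}\,F$ is in divergence form; this trades the cubic gradient term for integrals like $\int\rho E|u|\,|\nabla G|\,dx$ and $\int\rho|u|^3|\nabla G|\,dx$. Closing those requires the a priori moment bound $\sup_t\int\rho|u|^6\,dx+\int_0^T\!\!\int|u|^4|\nabla u|^2\,dx\,dt\le C$, and it is in obtaining \emph{this} estimate---by multiplying $(\ref{n2})_2$ by $6|u|^4u$ and checking coercivity of the resulting viscous form---that the inequality $\mu>4\lambda$ is needed (the paper's Lemma \ref{lem3.1}).

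By contrast, your proposed use of $\mu>4\lambda$, testing $(\ref{n2})_3$ by $qP^{q-1}$ to push $P$ into $L^\infty_tL^q_x$, is superfluous: you already recorded $P\in L^\infty(0,T^*;L^1\cap L^\infty)$ from $\|\rho\|_{L^\infty}\|\theta\|_{L^\infty}$ and energy conservation, so every $L^q$ bound on $P$ is free by interpolation. That computation neither requires $\mu>4\lambda$ nor supplies the missing velocity moment. In short, the gap is at the $\nabla u\in L^\infty_tL^2_x$ level: you need the total-energy reformulation of $P_t$ together with the $\rho|u|^6$ estimate, and the latter is the actual home of condition \eqref{mul}.
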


A few remarks are in order:

\begin{remark}
Under the conditions of Theorem \ref{thm1.2}, the local existence of the strong solutions was guaranteed in \cite{choe1}. Thus, the assumption $T^*$ makes sense.
\end{remark}

\begin{remark}

 The main contribution of Theorem \ref{thm1.2} asserts that Nash's conjecture even holds for zero heat-conductivity flows. In the case $\kappa=0$, the formation of singularity is only due to the concentration of either the density or temperature. In this sense, we give an affirmative answer to a strong version of Nash's conjecture.
\end{remark}

\begin{remark}
  Condition (\ref{mul}) is only used in obtaining estimates in Lemma \ref{lem3.1}.
\end{remark}

\begin{remark}
  It's easy to prove a same continuation principle for two-dimensional problem without any restrictions on $\mu,\lambda$. Since the proof is analogous and simpler, we omit it for simplicity.
\end{remark}

We may also investigate the following different boundary conditions.

(1) $\Omega=\mathbf{R}^3$ and constants $\tilde \n,\tilde P\ge 0,$ $(\rho,u,P)$ satisfies  the far field
condition either vacuum or non-vacuum:
\begin{equation}\label{bc1-1}
 (\r,u,P)(x,t)\rightarrow (\tilde \r,0, \tilde P)
 ~~\mbox{as}~~|x|\rightarrow\infty;
\end{equation}
and initial condition
\be\la{ini-2}
(\r_0-\tilde\r,P_0-\tilde P)\in L^1\cap H^1\cap W^{1,\tilde{q}},\quad
  u_0 \in D_0^1\cap
D^{2,2}, \quad    \r_0|u_0|^2\in L^1,
\ee

(2) $\Omega$ is a bounded smooth domain.
   \be \label{bc2-1}u=0 \quad\mbox{ on }\partial\Omega.\ee
 For constant $\tilde{q}\in(3,6]$,  assume that $(\rho_0\ge 0,u_0,P_0\ge 0)$ $u$ satisfies
\be\la{ini-3}
(\r_0,P_0)\in L^1(\Omega)\cap W^{1,\tilde{q}}(\Omega),\quad
  u_0 \in H_0^1(\Omega)\cap H^2(\Omega).
\ee
(3)  $\Omega=\mathbf{T}^3=\mathbf{R}^3/\mathbf{Z}^3$ and constant $\tilde{q}\in(3,6]$, assume that $(\rho_0\ge 0,u_0,P_0\ge 0)$ satisfy
\be\la{ini-4}
(\r_0,P_0)\in L^1(\mathbf{T}^3)\cap W^{1,\tilde{q}}(\mathbf{T}^3),\quad
  u_0 \in H^2(\mathbf{T}^3).
\ee
Our next theorem asserts that Nash's conjecture also holds for different boundary conditions.
\begin{theorem}\label{thm1.3}
 Let $(\r,u,P)$ be the strong solution to the
  full compressible Navier-Stokes system \eqref{n2}   together with
    \be \label{n1.3-1} (\n,u,P)(x,0)=(\r_0,u_0,P_0), \quad x\in \om,\ee
    and
    \begin{enumerate}
    \item $(\r_0,u_0,P_0)$ satisfy \eqref{bc1-1}-\eqref{ini-2} for $\Omega=\mathbf{R}^3$,
    \item $(\r_0,u_0,P_0)$ satisfy \eqref{bc2-1}-\eqref{ini-3}
    for bounded domain $\Omega$,
    \item $(\r_0,u_0,P_0)$ satisfy \eqref{ini-4} for periodic domain $\Omega$,
    \end{enumerate}

and the compatibility conditions:
\be \label{n1.1}
 -\m\Delta{u}_0-(\m+\l)\nabla {\rm div}u_0+\nabla P_0 =\sqrt{\r_0}g,\ee
with $g\in L^2(\Omega)$.
     If $T^\ast<\infty$ is the
maximal time of existence, then

 \be \la{fns2}\lim_{T\rightarrow
T^*}(\norm[L^\infty(0,T;L^{\infty})]{\n} +
\norm[L^\infty(0,T;L^\infty)]{\t}) = \infty.
 \ee
 provided
 \be\la{mul}
 \mu>4\lambda.
 \ee

\end{theorem}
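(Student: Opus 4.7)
My strategy is to argue by contradiction in the same spirit as for Theorem \ref{thm1.2}: assume the maximal existence time $T^*<\infty$ and that
\[
M:=\|\rho\|_{L^\infty(0,T^*;L^\infty)}+\|\theta\|_{L^\infty(0,T^*;L^\infty)}<\infty,
\]
and then derive a priori bounds, uniform on $[0,T^*)$, on all the norms listed in Definition \ref{def1.1} adapted to the relevant domain. A local existence result (as in \cite{choe1} and its standard adaptations) will then extend the solution past $T^*$, contradicting maximality. The hierarchy of estimates needed — basic energy, $\|\nabla u\|_{L^\infty_tL^2}$, $\|\sqrt{\rho}\,u_t\|_{L^\infty_tL^2}$ together with $\|\nabla u_t\|_{L^2_tL^2}$, then higher-order density and pressure bounds in $W^{1,\tilde q}$ — is identical to the one already built in the proof of Theorem \ref{thm1.2}, and the hypothesis $\mu>4\lambda$ is invoked in exactly the same place, namely Lemma \ref{lem3.1}.

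For the $\mathbb{R}^3$ far-field case \eqref{bc1-1}--\eqref{ini-2}, I would pass to the perturbations $(\bar\rho,\bar P):=(\rho-\tilde\rho,\,P-\tilde P)$, which satisfy a system that differs from \eqref{n2} only by lower-order terms involving the constants $\tilde\rho,\tilde P$; the effective viscous flux $F=(2\mu+\lambda)\,\mathrm{div}\,u-\bar P$ still obeys $\Delta F=\mathrm{div}(\rho\dot u)$, so all the commutator and interpolation steps transfer verbatim with $(\bar\rho,\bar P)$ replacing $(\rho,P)$, and the basic energy estimate becomes a relative-entropy inequality controlled in the same way. For the periodic case, the adaptation is even more routine: Poincaré--Sobolev inequalities hold modulo the spatial mean, and the mean of $\rho$, being conserved by the continuity equation, is bounded by the initial data; otherwise the Cauchy-problem estimates carry over essentially word-for-word.

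The genuinely delicate case, and the one I expect to be the main obstacle, is the bounded smooth domain with Dirichlet data \eqref{bc2-1}. The condition $u|_{\partial\Omega}=0$ kills every boundary term that would appear from integration by parts of divergence-form expressions multiplied by $u$, so the basic energy identity, the $\|\nabla u\|_{L^2}$-estimate, and the test-by-$u_t$ estimate (legitimate since $u_t|_{\partial\Omega}=0$) all proceed as before. The hard part is replacing every use of the full-space $L^p$-theory for the Lamé operator by its boundary-adapted counterpart: one must control $\|\nabla^2 u\|_{L^{\tilde q}}$ and, crucially, $\|\nabla u\|_{L^\infty}$ from bounds on the effective flux $F$, the vorticity $\omega$, and $\nabla P$ via Dirichlet $L^p$-regularity for the stationary Lamé system, and then do the same at the $u_t$-level. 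Once these boundary-adapted elliptic bounds are available, the propagation of $W^{1,\tilde q}$-regularity for $(\rho,P)$ along characteristics, and the closure of the estimate for $\|\sqrt{\rho}\,u_{tt}\|_{L^2}$ as well as the pressure equation $P_t+\mathrm{div}(Pu)+P\,\mathrm{div}\,u=2\mu|\mathfrak{D}(u)|^2+\lambda(\mathrm{div}\,u)^2$ — whose right-hand side is already controlled by $\|\nabla u\|_{L^\infty}$ — all go through as in the Cauchy setting, completing the contradiction in each of the three cases.
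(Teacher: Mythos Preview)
Your contradiction strategy and the treatment of Cases I and II match the paper almost exactly: the paper also passes to the perturbations $\rho-\tilde\rho$, $P-\tilde P$ for the non-vacuum far field, and in the periodic case it redefines the effective viscous flux as $G=(2\mu+\lambda)\,\mathrm{div}\,u-P+\bar P$ with $\bar P$ the spatial mean, which is precisely your ``Poincar\'e modulo the mean'' remark.

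For the bounded-domain case, however, there is a genuine gap. You propose to run the argument through the effective viscous flux $F$ and vorticity $\omega$, replacing whole-space elliptic theory by Dirichlet Lam\'e regularity. The paper explicitly notes why this does not work: ``there is no boundary condition for effective viscous flux,'' so the identities $\Delta G=\mathrm{div}(\rho\dot u)$ and $-\mu\Delta\omega=\mathrm{curl}(\rho\dot u)$ no longer yield $\|\nabla G\|_{L^2}+\|\nabla\omega\|_{L^2}\le C\|\rho\dot u\|_{L^2}$, and the step \eqref{mom-n} that closes Lemma~\ref{lem3.3} breaks down. The paper's remedy, following \cite{wz1}, is to decompose $u=v+w$ with $v,w\in H^1_0$, where $v$ solves $\mu\Delta v+(\mu+\lambda)\nabla\mathrm{div}\,v=\nabla P$ and $w$ solves $\mu\Delta w+(\mu+\lambda)\nabla\mathrm{div}\,w=\rho\dot u$. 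Then the dangerous term $-\int P_t\,\mathrm{div}\,u\,dx$ splits: the $v$-piece becomes an exact time derivative $-\tfrac12\tfrac{d}{dt}\int\bigl(\mu|\nabla v|^2+(\mu+\lambda)(\mathrm{div}\,v)^2\bigr)dx$, while in the $w$-piece it is $\mathrm{div}\,w$ (which now enjoys $\|\nabla^2 w\|_{L^2}\le C\|\rho\dot u\|_{L^2}$ from Dirichlet Lam\'e estimates) that plays the role $G$ played in \eqref{3.m1}. Your outline for the higher-order step (Lemma~\ref{lem3.7}) via Dirichlet Lam\'e regularity and the logarithmic estimate of Lemma~\ref{lem2.3} is correct, but without the $v$--$w$ decomposition the intermediate energy estimate (Lemma~\ref{lem3.3}) does not close in the bounded-domain setting.
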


We now make some comments on the analysis of this paper.

Let $(\rho,u,\t)$ be a strong
solution described in Theorem \ref{thm1.2}. Suppose that
\eqref{fns2} were false,  that is,
\begin{equation}\label{1.10-1-1}
\lim_{T\rightarrow
T^*}(\norm[L^\infty(0,T;L^{\infty})]{\n} +
\norm[L^\infty(0,T;L^\infty)]{\t}) = M_0<+\infty.
\end{equation}
One needs to show that
\be\la{result-1}
\ba
&\sup_{0\le t\le T^\ast}\left(\| (\r,P)\|_{H^1\cap
W^{1,\tilde{q}}}+\|\nabla{u}\|_{H^1}\right) \le C.
\ea
\ee
Higher order derivatives estimates for above quantities then follow easily from above regularities.

Let's say a few words on the regularity criterion (\ref{fns2}). In the absence of heat-conductivity, the equation for the temperature changes its form from parabolic to hyperbolic type, thus resulting in loss of regularity benefiting from the smooth effect of heat dissipation. But fortunately, it enjoys a same nonlinear structure as the density equation. Since  the  methods in all previous works \cite{HLW-1,wz1} depend crucially on Hoff's a priori estimates. The main point is how to avoid  terms involving
density gradient in calculations. It turns out to be possible to treat the terms arising from pressure gradient. Some new ideas are needed to recover all the a priori estimates, that is, instead of the temperature $\te$ and pressure $P$, we treat  the total energy $E=\frac{1}{2}|u|^2+\t$.

 Finally, the   a priori estimates  on both the
$L^\infty_tL^p_x$-norm of the density gradient and pressure gradient along with the $L^1_tL^\infty_x$-norm of the velocity
gradient  can be obtained   simultaneously  by
solving a logarithm Gronwall inequality based on a  logarithm  estimate (see  Lemma \ref{lem2.3}) and the a priori estimates we have just derived.

The rest of the paper is organized as follows: In the next section, we
collect some elementary facts and inequalities that will be needed
later. The main result, Theorem \ref{thm1.2},  is proved in Section \ref{sec3}.
\section{Preliminaries}
In this section, we recall some known facts and elementary
inequalities that will be used later.

First,  the following
existence and uniqueness of local strong solutions when the initial
density may not be positive and may vanish in an open set  can be
found in \cite{choe1}.
\begin{lemma}\label{lem2.1}
Assume that the initial data $(\r_0\ge 0,u_0,P_0\ge 0)$ satisfy
\eqref{ini-1}-\eqref{n1.2}. Then there exists a positive time
$T_1\in(0,\infty)$ and a unique strong solution $(\r,u,P)$ to the
 Cauchy problem \eqref{a1}--\eqref{a3}  on $R^3\times
(0,T_1]$.
\end{lemma}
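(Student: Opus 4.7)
The plan is to construct the local strong solution by a two-step scheme: first treat a non-vacuum regularization where standard parabolic theory applies, and then pass to the limit using uniform estimates anchored on the compatibility condition \eqref{n1.1}.

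For the regularization, I would replace the initial density by $\rho_0^\delta = \rho_0 + \delta$ for small $\delta > 0$ and leave $u_0, P_0$ unchanged; the same $g\in L^2$ still serves as the right-hand side of a modified compatibility identity, with an error of order $\delta \|\nabla^2 u_0\|_{L^2}$ that vanishes as $\delta \to 0$. For the strictly positive density case, I would solve \eqref{n2} by a Picard-type iteration: given an approximation $(\rho^n, u^n, P^n)$, define $\rho^{n+1}$ from the transport equation $\rho^{n+1}_t + \mathrm{div}(\rho^{n+1} u^n) = 0$, define $P^{n+1}$ from the analogous transport-with-source equation $P^{n+1}_t + \mathrm{div}(P^{n+1} u^n) + P^{n+1}\,\mathrm{div}\, u^n = 2\mu |\mathfrak{D}(u^n)|^2 + \lambda (\mathrm{div}\, u^n)^2$, and then solve the linear elliptic-parabolic momentum equation
\begin{equation*}
\rho^{n+1} u^{n+1}_t + \rho^{n+1} u^n \cdot \nabla u^{n+1} - \mu \Delta u^{n+1} - (\mu+\lambda)\nabla \mathrm{div}\, u^{n+1} = -\nabla P^{n+1}
\end{equation*}
for $u^{n+1}$. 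The transport equations are solved along characteristics (giving $\rho^{n+1}, P^{n+1} \in C([0,T];H^1\cap W^{1,\tilde q})$), and the momentum step is a standard linear parabolic problem since $\rho^{n+1} \geq \delta > 0$ gives uniform coercivity. Contraction in a lower-order norm $L^\infty_t L^2_x \cap L^2_t H^1_x$ on a short time interval $[0,T^\delta]$ yields the local solution $(\rho^\delta, u^\delta, P^\delta)$.

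The crux is obtaining a priori bounds independent of $\delta$ on a common time interval $[0,T_1]$. I would derive, in order: (i) basic energy estimates for $u^\delta$; (ii) an estimate on $\|\sqrt{\rho^\delta}\, u^\delta_t\|_{L^2}$ at $t=0$, which is exactly $\|g\|_{L^2}$ by the compatibility condition and is the reason one cannot drop \eqref{n1.1}; (iii) differentiating the momentum equation in $t$, testing by $u^\delta_t$, and absorbing the top-order terms using elliptic regularity for the Lamé system to close a bound on $\sup_t \|\sqrt{\rho^\delta} u^\delta_t\|_{L^2}^2 + \int_0^t \|\nabla u^\delta_t\|_{L^2}^2$; (iv) bounding $\|(\rho^\delta, P^\delta)\|_{H^1 \cap W^{1,\tilde q}}$ by testing the $\nabla$-derivative of the transport equations against $|\nabla \rho^\delta|^{p-2}\nabla \rho^\delta$ (and similarly for $P^\delta$), yielding a Gronwall-type inequality involving $\|\nabla u^\delta\|_{L^\infty}$, which in turn is controlled by the elliptic estimate on the momentum equation in terms of $\|\rho^\delta u^\delta_t\|_{L^{q_0}}$ and $\|\nabla P^\delta\|_{L^{q_0}}$. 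All these estimates close on a uniform short interval $T_1 = T_1(\|\rho_0\|_{H^1\cap W^{1,\tilde q}}, \|u_0\|_{D^1_0\cap D^{2,2}}, \|P_0\|_{H^1\cap W^{1,\tilde q}}, \|g\|_{L^2})$.

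With these $\delta$-uniform bounds, standard weak-* and strong compactness arguments yield a limit $(\rho, u, P)$ on $[0,T_1]$ satisfying the regularity listed in Definition \ref{def1.1}, and one verifies that it solves \eqref{n2}--\eqref{a3} almost everywhere. Uniqueness then follows by testing the difference of two solutions against the corresponding equations: writing $(\bar\rho, \bar u, \bar P)$ for the difference, one obtains an inequality of the form
\begin{equation*}
\frac{d}{dt}\bigl(\|\sqrt{\rho}\, \bar u\|_{L^2}^2 + \|\bar \rho\|_{L^2}^2 + \|\bar P\|_{L^2}^2\bigr) + \mu\|\nabla \bar u\|_{L^2}^2 \leq C(t)\bigl(\|\sqrt{\rho}\, \bar u\|_{L^2}^2 + \|\bar \rho\|_{L^2}^2 + \|\bar P\|_{L^2}^2\bigr),
\end{equation*}
and Gronwall closes the argument. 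The hardest step, and the one driving the need for the compatibility condition, is (iii) above: without a bound on $\sqrt{\rho^\delta} u^\delta_t$ at $t = 0$, one cannot control $\|\nabla u^\delta\|_{L^\infty}$ uniformly in $\delta$, and without that one cannot propagate the $W^{1,\tilde q}$ regularity of $\rho^\delta$ and $P^\delta$ from the transport equations.
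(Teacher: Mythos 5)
The paper itself does not prove this lemma: it is quoted directly from Cho--Kim \cite{choe1}, so there is no in-paper argument to compare with. Your construction --- positive-density approximation $\rho_0+\delta$, a linearized transport/parabolic Picard iteration, $\delta$-uniform estimates hinging on the compatibility condition \eqref{n1.1}, compactness, and uniqueness via a Gronwall argument --- is precisely the strategy of that reference, so at the level of architecture you are reproducing the standard proof rather than a new one; steps (ii)--(iv), which are the analytic core, are only named rather than carried out, but they are exactly the estimates of \cite{choe1}.

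Two remarks, one cosmetic and one substantive. Cosmetic: the regularized compatibility condition needs no error term of order $\delta\|\nabla^2 u_0\|_{L^2}$; keep the same left-hand side and set $g^\delta=(\rho_0/(\rho_0+\delta))^{1/2}g$, so that $\sqrt{\rho_0+\delta}\,g^\delta=\sqrt{\rho_0}\,g$ exactly and $\|g^\delta\|_{L^2}\le\|g\|_{L^2}$ uniformly in $\delta$. Substantive: the uniqueness inequality as you wrote it, built on $\|\sqrt{\rho}\,\bar u\|_{L^2}^2+\|\bar\rho\|_{L^2}^2+\|\bar P\|_{L^2}^2$, does not close in this setting. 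The critical term is $\int\bar\rho\,(\partial_t u_2+u_2\cdot\nabla u_2)\cdot\bar u\,dx$: since vacuum is allowed and the far-field density vanishes, $\|\sqrt{\rho}\,\bar u\|_{L^2}$ gives no control of $\bar u$ where $\rho$ degenerates, and $\bar u$ itself is only in $L^6$ (through $\|\nabla\bar u\|_{L^2}$, because $u\in D^1_0$), not in $L^2$ or $L^3$; with $\bar\rho$ measured in $L^2$ and $\partial_t u_2\in L^2_tL^6$ the H\"older exponents do not match, so the right-hand side cannot be put in the claimed Gronwall form. The standard repair, and the one used in \cite{choe1}, is to measure $\bar\rho$ (and $\bar P$) in $L^{3/2}$ or in a negative-order norm, estimate the term by $\|\bar\rho\|_{L^{3/2}}\|\partial_t u_2\|_{L^6}\|\bar u\|_{L^6}\le C\|\bar\rho\|_{L^{3/2}}\|\nabla\partial_t u_2\|_{L^2}\|\nabla\bar u\|_{L^2}$, and absorb $\|\nabla\bar u\|_{L^2}$ into the viscosity term; the quadratic source $2\mu|\mathfrak{D}(u)|^2+\lambda({\rm div}u)^2$ in the $\bar P$ equation is then handled with the $L^2_tL^\infty$ bound on $\nabla u$ coming from $u\in L^2(0,T;D^{2,q_0})$. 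With that modification your outline is consistent with the known proof.
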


Next, the following well-known Sobolev inequality will be used later
frequently (see \cite{nir}).
\begin{lemma}\label{lem2.2}
For $p\in(1,\infty)$ and $q\in(3,\infty)$, there exists a generic
constant $C>0$, which depends only on $p,~q$ such that for $f\in D_0^1$
and $g\in L^p\cap D^{1,q}$, we have
\begin{equation}\label{2.1}
\|f\|_{L^6}\leq C\|\nabla f\|_{L^2},~~\|g\|_{L^\infty}\leq
C\|g\|_{L^p}+C\|\nabla g\|_{L^q}.
\end{equation}
\end{lemma}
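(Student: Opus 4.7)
The two bounds are independent and both are standard consequences of Sobolev embedding in three dimensions; my plan is to reduce each to a well-known result.

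For the first inequality, since $f\in D_0^1$, by definition $f\in L^6(\rr)$ and $\na f\in L^2(\rr)$. The bound $\|f\|_{L^6}\le C\|\na f\|_{L^2}$ is the critical Sobolev embedding in $\rr$, i.e.\ the case $n=3$, $p=2$ of the Gagliardo--Nirenberg--Sobolev inequality with conjugate exponent $p^\ast=np/(n-p)=6$. A direct proof proceeds by approximation from $C_c^\infty(\rr)$ together with the pointwise representation $f = c_3(-\Delta)^{-1/2}(R\cdot\na f)$, where $R$ denotes Riesz transforms, followed by the Hardy--Littlewood--Sobolev inequality; the same conclusion can alternatively be obtained by Gagliardo's original argument based on the Loomis--Whitney inequality. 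This step is completely routine and may simply be cited from the reference \cite{nir}.

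For the second inequality, the assumption $q>3$ makes Morrey's inequality available, which provides the pointwise oscillation estimate
\be\nonumber
|g(x)-g(y)|\le C|x-y|^{1-3/q}\|\na g\|_{L^q(\rr)}\qquad\text{for all }x,y\in\rr.
\ee
To promote this into an honest $L^\infty$ bound, I would fix any $x_0\in\rr$ and the unit ball $B_1(x_0)$, and write
\be\nonumber
g(x_0)=\frac{1}{|B_1|}\int_{B_1(x_0)}g(y)\,dy+\frac{1}{|B_1|}\int_{B_1(x_0)}\bigl(g(x_0)-g(y)\bigr)\,dy.
\ee
Hölder's inequality bounds the first average by $C\|g\|_{L^p}$ (using $p>1$ to apply Hölder on the fixed ball), while the Morrey estimate bounds the integrand of the second average by $C\|\na g\|_{L^q}$ uniformly in $y\in B_1(x_0)$. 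Taking the supremum over $x_0$ delivers $\|g\|_{L^\infty}\le C\|g\|_{L^p}+C\|\na g\|_{L^q}$. An alternative route, which I would mention only if brevity demands it, is to invoke the Gagliardo--Nirenberg interpolation $\|g\|_{L^\infty}\le C\|g\|_{L^p}^\alpha\|\na g\|_{L^q}^{1-\alpha}$ for the appropriate $\alpha\in(0,1)$ determined by scaling and then apply Young's inequality.

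The only subtle point here is that Morrey's inequality controls oscillations but says nothing about the absolute size of $g$, so the $L^p$ term is genuinely necessary: it pins down the average of $g$ on some fixed-radius ball, which Morrey then extends to a pointwise bound. There is no real obstacle beyond this conceptual observation, and the whole lemma is best presented as a brief citation to standard Sobolev-space references such as \cite{nir}.
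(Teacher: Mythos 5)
Your proposal is correct: the first bound is exactly the critical Sobolev embedding $\dot H^1(\mathbb{R}^3)\hookrightarrow L^6$, and your Morrey-plus-unit-ball-average argument for the $L^\infty$ bound is the standard proof and needs only the routine caveat that one works with the continuous representative of $g$ furnished by Morrey's inequality. The paper itself gives no proof and simply cites \cite{nir}, so your treatment coincides with the paper's approach while supplying the standard details behind the citation.
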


The following  logarithm  estimate from\cite{hlx} will be used   to estimate $\|\nabla
u\|_{L^\infty}$ and $\|(\nabla\r,\na P)\|_{L^2\cap L^q}.$

\begin{lemma}\label{lem2.3}
For $3<q<\infty,$ there is a
constant  $C(q)$ such that  the following estimate holds for all
$\na u\in L^2 \cap D^{1,q} ,$ \be\la{ww7}\ba \|\na u\|_{L^\infty
}&\le C\left(\|{\rm div}u\|_{L^\infty }+ \|\na\times u\|_{L^\infty }
\right)\log(e+\|\na^2 u\|_{L^q })\\&\quad+C\|\na u\|_{L^2 } +C .
\ea\ee
\end{lemma}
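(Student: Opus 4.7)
The plan is to prove this by a frequency-localization argument, exploiting the fact that $\nabla u$ can be recovered from $\mathrm{div}\,u$ and $\mathrm{curl}\,u$ through a Riesz-transform type representation, and that these zero-order operators behave nicely on Littlewood--Paley blocks.

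First I would start from the identity $-\Delta u = \mathrm{curl}(\mathrm{curl}\,u) - \nabla(\mathrm{div}\,u)$, which gives the formal representation $\nabla u = \nabla(-\Delta)^{-1}\bigl(\mathrm{curl}(\mathrm{curl}\,u) - \nabla\,\mathrm{div}\,u\bigr)$. Hence each component of $\nabla u$ is obtained from $\mathrm{div}\,u$ and $\mathrm{curl}\,u$ by a second-order Fourier multiplier whose symbol is homogeneous of degree zero, i.e.\ by Riesz-type operators $\mathcal{R}$. Fix a standard Littlewood--Paley decomposition $I = S_0 + \sum_{j\ge 0}\Delta_j$ with smooth dyadic cutoffs. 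Because $\mathcal{R}$ commutes with the projectors and $\Delta_j = \widetilde{\Delta}_j\Delta_j$ for a slightly fattened projector $\widetilde{\Delta}_j$ whose convolution kernel has $L^1$-norm uniformly bounded in $j$, one obtains the block estimate
\begin{equation*}
\|\Delta_j \nabla u\|_{L^p} \le C\bigl(\|\Delta_j \mathrm{div}\,u\|_{L^p} + \|\Delta_j \mathrm{curl}\,u\|_{L^p}\bigr), \quad 1\le p\le\infty,
\end{equation*}
with $C$ independent of $j$. This is the one technical input that carries the whole proof.

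Next I would split $\nabla u$ into low, middle and high frequencies: pick an integer $N\ge 1$, to be chosen, and write $\nabla u = S_0\nabla u + \sum_{0\le j\le N}\Delta_j\nabla u + \sum_{j>N}\Delta_j\nabla u$. For the low block, Bernstein gives $\|S_0\nabla u\|_{L^\infty}\le C\|\nabla u\|_{L^2}$. For the middle range, applying the block estimate above with $p=\infty$ and summing yields a bound of order $(N+1)\bigl(\|\mathrm{div}\,u\|_{L^\infty} + \|\mathrm{curl}\,u\|_{L^\infty}\bigr)$. For the high-frequency tail, two applications of Bernstein give $\|\Delta_j\nabla u\|_{L^\infty}\le C\,2^{3j/q}\|\Delta_j\nabla u\|_{L^q}\le C\,2^{(3/q-1)j}\|\nabla^2 u\|_{L^q}$; since $q>3$, the exponent is strictly negative and the geometric series sums to at most $C\,2^{(3/q-1)N}\|\nabla^2 u\|_{L^q}$.

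Finally I would optimize $N$. Choosing $N$ to be the smallest integer such that $2^{(1-3/q)N}\ge e + \|\nabla^2 u\|_{L^q}$, i.e.\ $N\le C(q)\log(e+\|\nabla^2 u\|_{L^q}) + 1$, the high-frequency piece is absorbed into an additive constant while the middle piece produces the advertised logarithmic factor. Combining the three contributions gives exactly
\begin{equation*}
\|\nabla u\|_{L^\infty} \le C\bigl(\|\mathrm{div}\,u\|_{L^\infty}+\|\mathrm{curl}\,u\|_{L^\infty}\bigr)\log(e+\|\nabla^2 u\|_{L^q}) + C\|\nabla u\|_{L^2} + C.
\end{equation*}
The main obstacle is the uniform-in-$j$ $L^\infty\!\to\!L^\infty$ estimate on $\mathcal{R}\Delta_j$: Riesz transforms are not bounded on $L^\infty$ globally, so one must genuinely exploit the frequency localization to obtain a kernel that, after rescaling, is a fixed Schwartz function and hence convolution-bounded on $L^\infty$ uniformly in $j$. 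Everything else is standard Bernstein bookkeeping and the choice of~$N$.
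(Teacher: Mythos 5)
Your argument is correct: localizing to Littlewood--Paley blocks, using that $\nabla u$ is obtained from $\mathrm{div}\,u$ and $\mathrm{curl}\,u$ by degree-zero multipliers which are uniformly $L^\infty$-bounded on annuli, estimating the low block by Bernstein against $\|\nabla u\|_{L^2}$, the middle $N$ blocks against $\|\mathrm{div}\,u\|_{L^\infty}+\|\mathrm{curl}\,u\|_{L^\infty}$, the high tail against $2^{(3/q-1)N}\|\nabla^2 u\|_{L^q}$ (here $q>3$ is exactly what makes the tail summable), and then choosing $N\sim\log(e+\|\nabla^2 u\|_{L^q})$ gives precisely \eqref{ww7}; and since you only ever invoke the Riesz-type representation on blocks with frequency support in an annulus, the usual worries about $(-\Delta)^{-1}$ on the whole space do not arise. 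Note, however, that this paper does not prove the lemma at all --- it is quoted from the reference \cite{hlx}, where the estimate is established by the classical Beale--Kato--Majda physical-space method: one writes $\nabla u$ as a singular integral acting on $\mathrm{div}\,u$ and $\mathrm{curl}\,u$, splits the kernel into a near region, an intermediate dyadic annulus (which produces the logarithm), and a far region controlled by the $L^2$ norm. So your route is genuinely different from the one behind the citation: the frequency-space proof trades explicit kernel cut-offs for Bernstein bookkeeping and is arguably cleaner and more self-contained, while the potential-theoretic proof avoids Littlewood--Paley machinery and adapts more directly to related elliptic systems (such as the Lam\'e system used elsewhere in the paper). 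Both yield the same estimate with a constant depending only on $q$.
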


Finally, we consider  the following Lam\'e   system \be\la{u89} -\mu \Delta v(x)-(\mu+\lm)\na \div v(x)=  f(x) ,\quad   x\in\om, \ee where $v=(v_1,v_2,v_3),$   $f=(f_1,f_2,f_3),$ and $\mu,\lm$ satisfy \eqref{a2}.

Assume that $\om$ is a bounded smooth domain
 and \be \la{b2} v=0 \mbox{ on }{\p\om}.\ee

The following  logarithm  estimate for the Lam\'e system \eqref{u89}, which can be found in \cite{HL-C} will be used   to estimate $\|\nabla
u\|_{L^\infty}$ and $\|\nabla\r\|_{L^2\cap L^q}.$

\begin{lemma}\label{lem2.3} Let $\mu,\lm$ satisfy \eqref{a2}.
Assume that  $f=\div g$ where $g=(g_{kj})_{3\times 3} $ with $g_{kj}\in L^2\cap W^{1,q}$ for    $k,j=1,\cdots,3, $  $r\in (1,\infty), $  and $q\in (3,\infty).$  Then the  Lam\'e system  \eqref{u89} together with \eqref{b2}   has a unique solution $v\in   H_0^1\cap W^{2,q},$ and  there exists a generic positive constant $C$ depending only on $\mu,\lm,q, $ $r$ and $\Omega$ such that
\be \la{b6}\|\na  v\|_{L^r}\le C\|g\|_{L^r},\ee
and \be\la{ww7} \|\na v\|_{L^\infty}\le C \left(1+\ln
(e+\|\na g\|_{L^q})\|g\|_{L^\infty}+ \|g\|_{L^r}\right).\ee

\end{lemma}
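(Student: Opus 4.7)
The plan is to handle the three claims sequentially: existence and uniqueness in $H_0^1$, the Calder\'on--Zygmund-type estimate \eqref{b6}, and the logarithmic estimate \eqref{ww7}. For existence I would apply the Lax--Milgram lemma to the bilinear form $a(v,w)=\mu\int_\Omega \na v:\na w+(\mu+\lm)\int_\Omega (\div v)(\div w)$ on $H_0^1(\Omega)$. Coercivity is immediate when $\mu+\lm\ge 0$; when $\mu+\lm<0$ one rewrites $a(v,v)=\int 2\mu|\mathfrak{D}(v)|^2+\lm(\div v)^2$ after integration by parts, then uses $|\div v|^2\le 3|\mathfrak{D}(v)|^2$ together with the structural hypothesis $2\mu+3\lm\ge 0$ and Korn's inequality. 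The functional $w\mapsto\langle\div g,w\rangle=-\int g:\na w$ is bounded on $H_0^1$ since $g\in L^2$, producing a unique weak solution.

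For \eqref{b6} I would invoke the Agmon--Douglis--Nirenberg theory for elliptic systems. Taking divergence of \eqref{u89} yields $(2\mu+\lm)\Delta(\div v)=\div\div g$, so $\div v$ is recovered by a double application of Riesz-type operators, and then the original equation $\mu\Delta v=-\div g-(\mu+\lm)\na\div v$ represents $\na v$ as a singular integral of $g$, giving $\|\na v\|_{L^r}\le C\|g\|_{L^r}$ for every $r\in(1,\infty)$. The same machinery upgrades this: when $g\in W^{1,q}$, $\div g\in L^q$ and elliptic regularity yields $v\in W^{2,q}$ with $\|\na^2 v\|_{L^q}\le C(\|\na g\|_{L^q}+\|g\|_{L^q})$.

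The logarithmic estimate \eqref{ww7} I would obtain from a Brezis--Gallouet--Wainger/Kozono--Taniuchi-type interpolation applied to $\na v$,
\[
\|\na v\|_{L^\infty}\le C\bigl(1+\|\na v\|_{\mathrm{BMO}}\log(e+\|\na v\|_{W^{1,q}})+\|\na v\|_{L^r}\bigr)\qquad (q>3),
\]
combined with two ingredients already in hand: $\|\na v\|_{W^{1,q}}\le C(\|\na g\|_{L^q}+\|g\|_{L^q})$ from the ADN estimate and $\|\na v\|_{L^r}\le C\|g\|_{L^r}$ from \eqref{b6}. The missing ingredient is the endpoint BMO bound $\|\na v\|_{\mathrm{BMO}}\le C\|g\|_{L^\infty}$, which replaces the false $L^\infty\to L^\infty$ map and is proved by viewing $g\mapsto\na v$ as a standard Calder\'on--Zygmund operator and invoking its $L^\infty\to\mathrm{BMO}$ continuity. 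Substituting this bound and absorbing $\|g\|_{L^q}$ inside the logarithm via interpolation $\|g\|_{L^q}\le \|g\|_{L^\infty}^{1-r/q}\|g\|_{L^r}^{r/q}$ gives \eqref{ww7}. The main obstacle is precisely the BMO bound on a bounded domain: while on $\rr$ this is the standard continuity of compositions of Riesz transforms, on a bounded $\Omega$ one must work through the Lam\'e Green's function or use localization and reflection with a careful treatment of boundary contributions, and this is where the reference \cite{HL-C} carries the technical weight.
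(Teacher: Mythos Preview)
The paper does not prove this lemma; it simply quotes it from \cite{HL-C}. Your sketch is a faithful outline of the argument one finds there (Lax--Milgram for existence, ADN/Calder\'on--Zygmund theory for \eqref{b6} and the $W^{2,q}$ bound, and a BMO--$L^\infty$ logarithmic interpolation for \eqref{ww7}), and you correctly isolate the only genuinely delicate step: the endpoint $L^\infty\to\mathrm{BMO}$ continuity of the map $g\mapsto\na v$ on a bounded domain, which requires working with the Lam\'e Green matrix rather than bare Riesz transforms. One minor caution: the heuristic of taking the divergence to reduce to $(2\mu+\lm)\Delta(\div v)=\div\div g$ is clean on $\rr$ but carries no boundary condition on $\div v$ for the Dirichlet problem on $\Omega$, so it should be regarded only as motivation for why $\na v$ behaves like a singular integral of $g$; the rigorous $L^r$ and BMO bounds on $\Omega$ must go through the ADN/Green-function route you already invoke. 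Also note that your interpolation $\|g\|_{L^q}\le\|g\|_{L^\infty}^{1-r/q}\|g\|_{L^r}^{r/q}$ needs $r\le q$; when $r>q$ one instead uses $\|g\|_{L^q}\le C(\Omega)\|g\|_{L^r}$ by H\"older on the bounded domain, which is harmless.
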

\section{\la{sec3}Proof  of Theorem \ref{thm1.2}.}
Throughout the rest of the section, $C$ will denote a generic constant depending only on $\rho_0,u_0,\t_0,T^*,M_0,\lambda,\mu$.

We start with the standard energy estimate
\begin{lemma}
\be\la{energy}
\int\left(\frac{1}{2}\rho|u|^2+P\right)dx = \int\left(\frac{1}{2}\rho_0|u_0|^2+P_0\right)dx \triangleq E_0.
\ee
\end{lemma}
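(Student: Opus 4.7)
The plan is to exploit the fact that with $c_v = R = 1$ the pressure $P = \rho\theta$ plays the role of the internal energy density, so that $\frac{1}{2}\rho|u|^2 + P$ is the total energy density, and the dissipation/pressure-work terms will match up and cancel when combining the kinetic and internal energy identities.

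First I would take the $L^2(\mathbb{R}^3)$ inner product of the momentum equation \eqref{n2}$_2$ with $u$. Using the continuity equation \eqref{n2}$_1$, the convective part combines into the standard kinetic energy identity $(\rho u)_t\cdot u + \mathrm{div}(\rho u\otimes u)\cdot u = \partial_t(\tfrac12\rho|u|^2) + \mathrm{div}(\tfrac12\rho u|u|^2)$, whose integral reduces to $\frac{d}{dt}\int \tfrac12\rho|u|^2\,dx$ by the far-field condition \eqref{bc1} and the regularity $u\in D_0^1\cap D^{2,2}$, $\sqrt{\rho}u\in L^\infty_t L^2_x$. Integrating the viscous terms by parts, using that $\mathfrak{D}(u)$ is symmetric so $\mathfrak{D}(u):\nabla u = |\mathfrak{D}(u)|^2$, yields
\begin{equation*}
\frac{d}{dt}\int \tfrac12 \rho |u|^2\,dx + \int\bigl(2\mu|\mathfrak{D}(u)|^2 + \lambda(\mathrm{div}\,u)^2\bigr)dx = \int P\,\mathrm{div}\,u\,dx.
\end{equation*}

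Next I would simply integrate the pressure equation \eqref{n2}$_3$ over $\mathbb{R}^3$; the transport term $\mathrm{div}(Pu)$ contributes zero given the far-field decay and the fact that $P\in C([0,T];L^1\cap H^1\cap W^{1,\tilde q})$ and $u\in D_0^1$. This gives
\begin{equation*}
\frac{d}{dt}\int P\,dx + \int P\,\mathrm{div}\,u\,dx = \int \bigl(2\mu|\mathfrak{D}(u)|^2 + \lambda(\mathrm{div}\,u)^2\bigr)dx.
\end{equation*}
Adding the two identities, the dissipation terms and the $\int P\,\mathrm{div}\,u$ terms cancel exactly, leaving $\frac{d}{dt}\int(\tfrac12\rho|u|^2 + P)\,dx = 0$. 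Integrating in time from $0$ to $t$ produces \eqref{energy}.

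The only subtle points are purely technical: justifying all the integrations by parts and the vanishing of boundary terms at infinity. These are not an obstacle here because Definition \ref{def1.1} provides $u\in L^\infty(0,T;D_0^1\cap D^{2,2})$ and $(\rho,P)\in C([0,T];L^1\cap H^1\cap W^{1,\tilde q})$ with the compatibility condition \eqref{n1.1}, which together with $\rho|u|^2\in L^1$ give more than enough decay and regularity to make every manipulation above rigorous. Thus the lemma is really an identity built into the system rather than an estimate — there is no genuine analytical difficulty, and this identity will serve as the starting point for all the higher-order estimates pursued in the rest of Section \ref{sec3}.
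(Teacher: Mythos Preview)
Your proof is correct and is exactly the standard derivation; the paper itself states this lemma without proof, calling it ``the standard energy estimate,'' so your argument simply fills in the details the authors took for granted.
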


Next, a high energy estimate holds under the condition \eqref{1.10-1-1}.

\begin{lemma}\la{lem3.1} Under the condition \eqref{1.10-1-1}, as long as $\mu>4\lambda$, it holds that
\be
\int\rho(|u|^2+|u|^6)dx + \int_0^t\int|\na u|^2(1+|u|^2 + |u|^4)dxdt\le C
\ee
\end{lemma}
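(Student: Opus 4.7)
The plan is to derive the three inequalities packaged in the claim by successively testing the momentum equation against $u$, $|u|^{2}u$, and $|u|^{4}u$. Throughout, the only information used about the pressure is $\|P\|_{L^\infty}\le M_0^2$ (from \eqref{1.10-1-1}) and, together with $\int P\,dx\le E_0$, the interpolated bound $\|P\|_{L^r}\le C$ for every $r\in[1,\infty]$.

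The first test, against $u$, is standard: Cauchy--Schwarz absorbs $\int P\,\div u$ into $\mu\int|\na u|^2$, giving $\sup_t\int\n|u|^2 + \int_0^t\int|\na u|^2\,dxds\le C$. The second test, against $|u|^{2}u$, produces after integration by parts the dissipation $\mu\int|u|^2|\na u|^2 + 2\mu\int|u|^2|\na|u||^2 + (\mu+\lm)\int(\div u)^2|u|^2$ together with a compressible cross term $(\mu+\lm)\int\div u\,(u\cdot\na|u|^2)$. Using $|u\cdot\na|u|^2|\le 2|u|^2|\na|u||$ and Young's inequality with a small parameter, this cross term is absorbed into the dissipation under the mild constraint $\mu+\lm\le 2\mu$, which is immediate from $\mu>4\lm$. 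The pressure term $\int P\,\div(|u|^{2}u)$ is bounded by $\ep\int|u|^2|\na u|^2 + C\|P\|_{L^3}^{2}\|u\|_{L^6}^{2}\le \ep\int|u|^2|\na u|^2 + C\|\na u\|_{L^2}^{2}$, which is integrable in time by the first step. Gronwall then yields $\sup_t\int\n|u|^4 + \int_0^t\int|u|^2|\na u|^2\,dxds\le C$.

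The main step, and the only place where the hypothesis $\mu>4\lm$ is actually needed, is the test against $|u|^{4}u$. The natural dissipation is $\mu\int|u|^4|\na u|^2 + 4\mu\int|u|^4|\na|u||^2 + (\mu+\lm)\int(\div u)^2|u|^4$, and the cross term, after $|u\cdot\na|u|^4|\le 4|u|^4|\na|u||$ and Young with parameter $\de$, is at most $2\de(\mu+\lm)\int(\div u)^2|u|^4 + 2\de^{-1}(\mu+\lm)\int|u|^4|\na|u||^2$. The hard part is absorbing the second piece: relying only on the natural coefficient $4\mu$ forces $\mu+\lm\le\mu$, i.e.\ $\lm\le 0$. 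The remedy is to use the pointwise inequality $|\na|u||^2\le|\na u|^2$ to transfer a fraction $(1-t)\mu$ of $\mu\int|u|^4|\na u|^2$ onto the $|\na|u||^2$-dissipation; the positivity constraints $\de\le 1/2$ and $(5-t)\mu\de\ge 2(\mu+\lm)$ are then jointly solvable for some $(t,\de)\in(0,1)\times(0,1/2)$ exactly when $4\lm<\mu$.

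The pressure term at this level, $\int P\,\div(|u|^{4}u)$, is bounded by $\ep\int|u|^4|\na u|^2 + C\int P^2|u|^4$. Applying H\"older and the Sobolev inequality to $|u|^2$ gives $\int P^2|u|^4\le \|P\|_{L^3}^{2}\|u\|_{L^{12}}^{4} = \|P\|_{L^3}^{2}\||u|^2\|_{L^6}^{2}\le C\|\na|u|^2\|_{L^2}^{2}\le C\int|u|^2|\na u|^2$, which is integrable in time by the second step. Gronwall closes the estimate and delivers $\sup_t\int\n|u|^6 + \int_0^t\int|u|^4|\na u|^2\,dxds\le C$; combined with the first two steps this is exactly the statement of the lemma.
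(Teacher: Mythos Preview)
Your argument is correct and follows the same overall strategy as the paper: test the momentum equation against $q|u|^{q-2}u$ for $q=2,4,6$, and show that the compressible cross term $(\mu+\lambda)\,\div u\,(u\cdot\na|u|^{q-2})$ can be absorbed into the natural dissipation precisely when $\mu>4\lambda$ (at $q=6$). Your Young-with-parameter bookkeeping is equivalent to the paper's completing-the-square: the paper writes the quadratic form as $\mu|\na u|^2+(\mu+\lambda)\bigl(|\div u|-\tfrac{q-2}{2}|\na|u||\bigr)^2+\bigl[\mu(q-2)-\tfrac14(\mu+\lambda)(q-2)^2\bigr]|\na|u||^2$, which for $q=6$ leaves the coefficient $\mu-4\lambda$ after borrowing from $\mu|\na u|^2$ via $|\na|u||\le|\na u|$---exactly your constraint $(5-t)\mu\delta\ge 2(\mu+\lambda)$ at $\delta=\tfrac12$.

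The one genuine difference is the pressure term. The paper exploits $P=\rho\theta\le C\rho\le C\rho^{1/2}$ (both $\rho$ and $\theta$ bounded) to get $\int P\,\div(|u|^{q-2}u)\le \ep\int|u|^{q-2}|\na u|^2+C\int\rho|u|^{q-2}\le \ep\int|u|^{q-2}|\na u|^2+C\bigl(\int\rho|u|^q\bigr)^{(q-2)/q}$, so each exponent $q$ closes by Gronwall on its own. You instead use H\"older and Sobolev to bound $\int P^2|u|^{q-2}$ by the dissipation from the \emph{previous} level ($\|\na u\|_{L^2}^2$ for $q=4$, $\int|u|^2|\na u|^2$ for $q=6$), so your three steps must be done sequentially. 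Both routes are valid; the paper's is slightly more self-contained, while yours avoids using the pointwise relation $P\le C\rho$ and would adapt more readily if only an $L^r$ bound on $\theta$ were available.
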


\begin{proof} It follows from \eqref{1.10-1-1} that
\be
\|\rho\|_{L^1\cap L^\infty}\le C.
\ee
Multiplying $(1.1)_2$ by $q|u|^{q-2}u$, and integrating over $\O$, one obtains
by using lemma $2.1$ that
\be\la{eee-1-1}
\ba
& \frac{d}{dt}\int \rho|u|^qdx + \int (q|u|^{q-2}[\mu|\nabla u|^2 +
(\lambda + \mu)(\rm{div}u)^2 + \mu(q-2)|\nabla|u||^2] \\
& + q(\lambda + \mu)(\nabla|u|^{q-2})\cdot u\rm{div}u)dx \\
& = q\int P\rm{div}(|u|^{q-2}u)dx\\
& \le C\int \rho^{\frac{1}{2}}|u|^{q-2}|\nabla u|dx\\
& \le\ep\int |u|^{q-2}|\nabla u|^2dx + C(\ep)\int \rho|u|^{q-2}dx\\
& \le\ep\int |u|^{q-2}|\nabla u|^2dx + C(\ep)(\int \rho|u|^qdx)^{\frac{q-2}{q}}.
\ea
\ee

Noting that $|\nabla |u||\le |\nabla u|$, one gets that
\be\la{qqq}
\ba
& q|u|^{q-2}[\mu|\nabla u|^2 + (\lambda + \mu)(\rm{div}u)^2 + \mu(q-2)|\nabla|u||^2]
 + q(\lambda + \mu)(\nabla|u|^{q-2})\cdot u\rm{div}u \\
 & \ge q|u|^{q-2}[\mu|\nabla u|^2 + (\lambda + \mu)(\rm{div}u)^2 + \mu(q-2)|\nabla|u||^2\\
 & - (\lambda + \mu)(q-2)|\nabla|u||\cdot|\rm{div}u|]\\
 & = q|u|^{q-2}[\mu|\nabla u|^2 + (\lambda + \mu)(|\rm{div}u| - \frac{(q-2)}{2}|\nabla|u||)^2]\\
 & + q|u|^{q-2}[\mu(q-2) - \frac{1}{4}(\lambda + \mu)(q-2)^2]|\nabla|u||^2\\
 & \ge q|u|^{q-2}[\mu(q-1) - \frac{1}{4}(\lambda + \mu)(q-2)^2]|\nabla|u||^2
\ea
\ee
Consequently, recall $q=6$ and $\mu>4\lambda$, the left-hand side of \eqref{qqq} is greater than
 \be
 6(\mu-4\lambda)|u|^4|\nabla|u||^2.
 \ee
 Hence, taking $\epsilon$ small enough in \eqref{eee-1-1} and Gronwall's inequality implies
\be
\int\rho|u|^6dx + \int_0^t\int|\na u|^2|u|^4dxdt\le C.
\ee
Taking $q=4$ again, one can also prove
\be
\int\rho|u|^4dx + \int_0^t\int|\na u|^2|u|^2dxdt\le C.
\ee
This completes the proof.
\end{proof}

Also, $\theta$ is always non-negative before blowup time $T^*$.
\begin{lemma}
As long as $\t_0\ge 0$, it holds that
\be\la{theta}
  \inf_{(x,t)\in R^3\times(0,T^*)}\t(x,t)\ge 0.
\ee
\end{lemma}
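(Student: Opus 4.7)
The plan is to prove the claim as a maximum principle for the pressure $P=\rho\theta$: I would first show $P\ge 0$ on $R^3\times(0,T^*)$, and then deduce $\theta\ge 0$ wherever $\rho>0$. The pressure is the natural variable here because it is an unknown of the system \eqref{n2} with $C([0,T];H^1\cap W^{1,q_0})$-regularity, and the mass-weighted bound on $\theta$ translates to a pointwise bound on $P$.

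First I would rewrite the third equation of \eqref{n2} in non-conservative form. Expanding $\mathrm{div}(Pu)=u\cdot\nabla P+P\,\mathrm{div}\,u$ yields
\[
\dot P+2P\,\mathrm{div}\,u=Q,\qquad Q:=2\mu|\mathfrak{D}(u)|^2+\lambda(\mathrm{div}\,u)^2.
\]
The crucial pointwise observation is $Q\ge 0$: decomposing $\mathfrak{D}(u)=\mathfrak{D}_0(u)+\tfrac{1}{3}(\mathrm{div}\,u)I$ with $\mathfrak{D}_0$ trace-free gives
\[
Q=2\mu|\mathfrak{D}_0(u)|^2+\Bigl(\tfrac{2\mu}{3}+\lambda\Bigr)(\mathrm{div}\,u)^2\ge 0
\]
by the physical restriction \eqref{a2}.

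Second I would propagate nonnegativity of $P$ via an $L^2$ estimate on its negative part $P^-:=\min(P,0)$. Multiplying the conservative form $P_t+\mathrm{div}(Pu)+P\,\mathrm{div}\,u=Q$ by $P^-$, integrating over $R^3$, and performing the standard manipulations $\int P_tP^-\,dx=\tfrac12\tfrac{d}{dt}\|P^-\|_{L^2}^2$ and $\int\mathrm{div}(Pu)P^-\,dx=\tfrac12\int(P^-)^2\mathrm{div}\,u\,dx$ (justified by approximating $\tfrac12(P^-)^2$ by smooth convex functions, the needed regularity being supplied by $P\in C([0,T];H^1)$) leads to
\[
\tfrac12\tfrac{d}{dt}\|P^-\|_{L^2}^2+\tfrac32\int(P^-)^2\mathrm{div}\,u\,dx=\int QP^-\,dx\le 0.
\]
For each $T<T^*$, Definition \ref{def1.1} supplies $u\in L^2(0,T;D^{2,q_0})\hookrightarrow L^2(0,T;W^{1,\infty})$ since $q_0>3$, so $\|\mathrm{div}\,u\|_{L^\infty}$ is integrable on $[0,T]$. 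Since $\rho_0\ge 0$ and $\theta_0\ge 0$ give $P_0=\rho_0\theta_0\ge 0$, hence $P^-(\cdot,0)\equiv 0$, Gronwall's inequality forces $P^-\equiv 0$ on $[0,T^*)$, i.e., $P\ge 0$. On $\{\rho>0\}$ one then has $\theta=P/\rho\ge 0$, which yields the stated infimum bound (with the standard convention that $\theta$ is extended by zero on any vacuum set, consistent with the strong-solution framework).

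I expect no essential difficulty; the only mildly delicate step is the distributional justification of multiplying the $P$-equation by the Lipschitz-but-not-$C^1$ function $P^-$, which is the standard Kato-inequality calculation and causes no trouble in the regularity class of Definition \ref{def1.1}. A parallel alternative would argue along particle trajectories: the strong-solution regularity provides a well-defined flow $X(t;x_0)$ for $u$, and the integrating factor $\exp\bigl(\int_0^t 2\,\mathrm{div}\,u(X(s),s)\,ds\bigr)$ converts $\dot P+2P\,\mathrm{div}\,u=Q\ge 0$ into the pointwise inequality $P(X(t),t)\ge 0$ along every trajectory.
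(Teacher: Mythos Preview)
Your proposal is correct. The paper's proof takes exactly the particle-trajectory route you mention only as an alternative at the end: it writes $P_t+\operatorname{div}(Pu)+P\operatorname{div}u=F\ge 0$, defines the flow $X(x,t)$ generated by $u$, integrates along characteristics with the factor $\exp\bigl(2\int_0^t\operatorname{div}u\,ds\bigr)$, and reads off $P\ge 0$, hence $\theta\ge 0$.

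Your primary argument is instead the $L^2$-energy estimate on $P^-$ followed by Gronwall. Both are standard ways of extracting the maximum principle from the transport-type equation $\dot P+2P\operatorname{div}u=Q\ge 0$: the Lagrangian version (the paper's) is more explicit and yields a pointwise formula, but requires the flow map to be well defined, which here uses $u\in L^1_tW^{1,\infty}_x$; your Eulerian version avoids defining the flow and is slightly more robust, at the cost of the mild Kato-type chain-rule justification you flagged. Since you explicitly describe the characteristics argument as well, the two write-ups are essentially interchangeable.
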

\begin{proof}
  The equation for $P$ can be rewritten as
  \be
  P_t + div(Pu) + Pdiv u = F\ge 0.
  \ee
  Since the solution is smooth , we can always define particle path before blowup time.
   \be
   \left\{
   \ba
   &\frac{d}{dt}X(x,t)=u(X(x,t),t),\\
   & X(x,0)=x.
   \ea
   \right.
   \ee

   Consequently, along particle path, one has
  \be
  \frac{d}{dt}P(X(x,t),t) = -2Pdiv u + F
  \ee
  implies
  \be\la{pressure}
  P(X(x,t),t)=\exp(-2\int_0^tdiv uds)\left[P(0) +
  \int_0^t\exp(2\int_0^sdiv ud\tau)Fds\right]\ge 0.
  \ee
  Hence, $\theta\ge 0$ follows immediately from (\ref{pressure}).
\end{proof}

 Before proving Theorem \ref{thm1.2}, we  state some a priori estimates under the condition (\ref{1.10-1-1}).

 Let   $E$ be the  specific  energy defined by \be\la{energy}
E\triangleq  \te+\frac{|u|^2}{2}.\ee
Let $G,\omega$ be the effective viscous flux, vorticity respectively given by
\be
G=(2\m+\lambda)\mbox{div} u-P,\quad\omega=\mbox{curl} u.
\ee
Then, the momentum equations can be rewritten as
\be\la{mom}
\rho\dot{u} =\nabla G - \m\na\times\omega.
\ee

Then, we derive the following crucial  estimate on the $L^\infty(0,T;
L^2)$-norm of $\nabla u.$
\begin{lemma}\label{lem3.3}
Under the condition \eqref{1.10-1-1}, it holds that for $0\leq
T<T^\ast$,
\be\label{3.4}\sup_{0\leq t\leq T}
\int|\nabla u|^2dx+\int_{0}^{T}\int\r|\dot{u}|^2dxdt \leq C.
\ee
\end{lemma}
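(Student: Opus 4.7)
\emph{The plan} is to test the momentum equation, recast in Hoff form as $\rho\dot u=\na G-\m\na\times\omega$ with $G=(2\m+\l)\div u-P$ and $\omega=\curl u$, against the material derivative $\dot u$ and integrate over $\rr$. Using the commutator identities $\div\dot u=(\div u)_t+u\cdot\na\div u+\p_iu_j\p_ju_i$ and its $\curl$ analogue, integration by parts produces
\[
\tfrac12\frac{d}{dt}\int\bigl[(2\m+\l)(\div u)^2+\m|\omega|^2\bigr]dx+\int\rho|\dot u|^2\,dx=-\int\na P\cdot\dot u\,dx+\mathcal{R},
\]
where $\mathcal{R}$ collects terms cubic in $\na u$ that arise from the convective parts of the material derivatives.

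\emph{The new difficulty} is the pressure term. In the heat-conductive case one handles it via the temperature equation; here, lacking that luxury, I would use the pressure equation \eqref{n2}$_3$ directly. Writing $-\int\na P\cdot\dot u=\int P\,\div\dot u$ and substituting $P_t=-\div(Pu)-P\div u+2\m|\mathfrak{D}(u)|^2+\l(\div u)^2$ into the resulting time-integration by parts gives
\[
-\int\na P\cdot\dot u\,dx=\frac{d}{dt}\int P\div u\,dx+\int P(\div u)^2\,dx+\int P\p_iu_j\p_ju_i\,dx-\int\bigl(2\m|\mathfrak{D}(u)|^2+\l(\div u)^2\bigr)\div u\,dx.
\]
Since $P=\rho\t\le M_0^2$ by \eqref{1.10-1-1}, every term after the $\frac{d}{dt}$ is bounded by $C\|\na u\|_{L^2}^2+C\|\na u\|_{L^3}^3$.

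\emph{The main obstacle} is therefore controlling $\int|\na u|^3\,dx$ without direct access to $\|\na P\|_{L^2}$. The tool is the effective-viscous-flux decomposition: applying $\div$ and $\curl$ to the momentum equation yields $\Delta G=\div(\rho\dot u)$ and $\m\Delta\omega=-\curl(\rho\dot u)$, so standard elliptic estimates give
\[
\|\na G\|_{L^2}+\|\na\omega\|_{L^2}\le C\|\sqrt\rho\dot u\|_{L^2}.
\]
Reconstructing $\na u$ from $G$, $\omega$, and $P$ via Sobolev embedding, together with $\|P\|_{L^6}\le C$ (from \eqref{energy} and $P\le M_0^2$), gives $\|\na u\|_{L^6}\le C(\|\sqrt\rho\dot u\|_{L^2}+1)$. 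Interpolation and Young's inequality then yield
\[
\|\na u\|_{L^3}^3\le C\|\na u\|_{L^2}^{3/2}\|\na u\|_{L^6}^{3/2}\le\tfrac14\|\sqrt\rho\dot u\|_{L^2}^2+C\bigl(1+\|\na u\|_{L^2}^6\bigr).
\]

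Setting $\Phi(t)=\tfrac12\int\bigl[(2\m+\l)(\div u)^2+\m|\omega|^2\bigr]dx-\int P\div u\,dx$ and absorbing the dissipative piece into the LHS, I would arrive at $\Phi'(t)+\tfrac14\int\rho|\dot u|^2\,dx\le C(1+\Phi(t)^3)$. The compatibility condition \eqref{n1.1} controls $\Phi(0)$ and $\|\sqrt\rho\dot u(\cdot,0)\|_{L^2}$. A pure cubic Gronwall only closes for short time, so the finishing touch is to use the high-moment bound $\int\rho|u|^4\,dx\le C$ from Lemma \ref{lem3.1} to tame the $u\cdot\na u$ part of $\dot u$ and split $\|\na u\|_{L^2}^6$ more carefully, reducing the nonlinearity to a form amenable to a (possibly logarithmic) Gronwall argument, thereby extending \eqref{3.4} up to $T^*$.
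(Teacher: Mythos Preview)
Your computation of the pressure term is correct, but it produces the cubic integral $\int\bigl(2\mu|\mathfrak{D}(u)|^2+\lambda(\div u)^2\bigr)\div u\,dx$, and this is exactly where the argument breaks. Your interpolation $\|\na u\|_{L^3}^3\le\tfrac14\|\sqrt\rho\dot u\|_{L^2}^2+C(1+\|\na u\|_{L^2}^6)$ leads to $\Phi'\le C(1+\Phi^3)$, which blows up in finite time and cannot be closed on $[0,T^*)$. The proposed rescue via Lemma~\ref{lem3.1} does not work: that lemma controls $\int\rho|u|^q\,dx$ and $\int_0^T\!\!\int|u|^{q-2}|\na u|^2\,dx\,dt$, quantities that do not dominate $\int|\na u|^3\,dx$ or $\|\na u\|_{L^2}^6$. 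The $u\cdot\na u$ portion of $\dot u$ is not the issue; the dangerous cubic comes from the viscous dissipation $2\mu|\mathfrak{D}(u)|^2+\lambda(\div u)^2$ sitting as a source in the pressure equation, and no amount of splitting removes it.

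The paper's key idea is precisely to avoid this dissipation term by replacing the pressure equation with the conservation law for the \emph{total energy} $E=\theta+\tfrac12|u|^2$. Since $\rho E=P+\tfrac12\rho|u|^2$, one writes $P_t=(\rho E)_t-\tfrac12(\rho|u|^2)_t$, and the equation $(\rho E)_t+\div(\rho Eu)=\div F$ has flux $F=\tfrac{\mu}{2}\na|u|^2+\mu u\cdot\na u+\lambda u\,\div u-Pu$ containing \emph{no} $|\na u|^2$ term---the dissipation has been absorbed into $E$. Pairing $P_t$ with the effective viscous flux $G$ (after multiplying the momentum equation by $u_t$ rather than $\dot u$) and integrating by parts then yields only terms of the form $\int\rho E|u|\,|\na G|$, $\int|u||\na u|\,|\na G|$, and $\int\rho|u|^2|\na u|^2$, all of which are controlled by $\eta\|\na G\|_{L^2}^2$ plus the time-integrable quantities $\int\rho|u|^6$, $\int\rho|u|^2$, and $\int|u|^2|\na u|^2$ supplied by Lemma~\ref{lem3.1}. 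Since $\|\na G\|_{L^2}^2\le C\int\rho|\dot u|^2$, the small-$\eta$ piece is absorbed and one obtains a \emph{linear} differential inequality that integrates directly, without any Gronwall blow-up.
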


\begin{proof}
 First, multiplying $(\ref{a1})_2$ by $u_t$ and integrating the resulting
equation over $\O$ show that
\be \label{3.14}\ba
& \f12\f{d}{dt}\int\left(\m|\nabla u|^2+(\m+\l)(\mbox{div}u)^2\right)dx+\int\r|\dot{u}|^2dx\\
&= \int\r\dot{u}\cdot(u\cdot\nabla){u} dx+\int P\mbox{div}u_tdx\\
&  \le\f14\int\r|\dot{u}|^2dx+C\int\n|u|^2|\nabla u|^2dx+\f{d}{dt}\int P\mbox{div}udx -\int P_t\mbox{div}udx\\
& =\f14\int\r|\dot{u}|^2dx+C\int\n|u|^2|\nabla u|^2dx\\
&\quad\quad+\f{d}{dt}\int \left(P\mbox{div}u-\frac{1}{2(2\mu+\lambda)}P^2\right)dx -\frac{1}{2(2\mu+\lambda)}\int P_tGdx
\ea\ee

Then, we will estimate the last term on the righthand side of \eqref{3.14}.

First, it follows from $\eqref{a1} $   that $E$ satisfies
\begin{equation}\label{en-1}
(\r E)_t+\mbox{div}(\r Eu) =\div F
\ee
with
\be
\ba
F& \triangleq  \frac{\mu}{2}\na(|u|^2) +\m u\cdot\nabla u +\l u\mbox{div}u    - Pu.
\ea
\end{equation}

 It follows from \eqref{energy} and \eqref{en-1} that
\be \label{3.m1} \ba
   -\int P_t  G dx    & =-\int (\rho E)_t G dx+ \frac{1}{2}  \int (\rho |u|^2)_t G dx   \\
 & \le C   \int \left( \rho E |u| +|u||\na u|\right) |\na G| dx \\&\quad   - \frac{1}{2}\int\left( {\rm div} (\rho u) |u|^2 G       - 2 \rho u\cdot u_t  G \right)dx   =\sum\limits_{i=1}^3I_i.
\ea \ee

Cauchy's and Sobolev's inequalities yield that
\be\ba\la{mm-2}
 I_1+I_2&\le \eta\|\na G\|_{L^2}^2+C(\eta)\int\left(\n^2E^2|u|^2 + |u|^2|\na u|^2\right)dx\\
 &\le \eta\|\na G\|_{L^2}^2+C(\eta)\int\left(\n|u|^6 +\n|u|^2 + |u|^2|\na u|^2\right)dx.
 \ea\ee

 Integration by parts and recall $G=(2\mu+\lambda)\text{div}u -P$ also gives
\be\ba\la{mm-3} I_3&\le C\int \n |u|^3|\na G|dx+C\int \left(\n|u|^2|\na u|+\n |u||\dot u|\right)|G|dx\\
& \le C\int \n |u|^3|\na G|dx+C\int \left(\n|u|^2|\na u|+\n |u||\dot u|\right)(|\na u|+|P|)dx
\\ & \le  C(\eta)\int \left(\n |u|^6+\n |u|^2 + |u|^2|\na u|^2\right) dx +\eta\|\na G\|_{L^2}^2+\eta\int\n |\dot u|^2dx.\ea\ee

In view of \eqref{mom},
\be\la{mom-m}
\rho\dot{u} =\nabla G - \m\na\times\omega,
\ee
which implies
\be\la{mom-n}
\|\na G\|_2^2\le C\|\n\dot{u}\|_2^2\le C\int\rho|\dot{u}|^2dx.
\ee

Substituting  \eqref{3.m1}-\eqref{mm-3} and \eqref{mom-n}  into \eqref{3.14}, one obtains after choosing $\eta$ suitably small that
\be\la{zk1}\ba &\frac{d}{dt}\int\Phi dx +\int\n|\dot u|^2dx\\&\le  C \int \left(\n |u|^6 +\n |u|^2 + |u|^2|\na u|^2 \right)dx\ea\ee
where  \bnn\Phi\triangleq  \m|\nabla u|^2+(\m+\l)(\mbox{div}u)^2-2P\div u+(2\mu+\lambda)P^2
\enn
satisfies \be\la{p23} \Phi\ge \frac{\mu}{2} |\na u|^2 -C\int P^2\,dx.\ee

Consequently, Gronwall's inequality together with Lemma \ref{lem3.1} implies (\ref{3.4}). This finishes the proof of Lemma \ref{lem3.3}.

\end{proof}

Next lemma deals with $\na\dot{u}$.
\begin{lemma}\la{lem-2}
Under the condition \eqref{1.10-1-1}, it holds that for $0\leq
T<T^\ast$,
\be\la{lemm-1}
\int\r|\dot{u}|^2dx + \int_0^t\int|\na\dot{u}|^2dxdt\le C.
\ee
\end{lemma}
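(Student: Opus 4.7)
The plan is to carry out the standard Hoff-style higher-order energy estimate for $\dot u$, adapted to the absence of heat conductivity. Namely, I apply the operator $\partial_t(\cdot)+\div(u\,\cdot\,)$ to the $j$-th component of the momentum equation $\eqref{a1}_2$, multiply the result by $\dot u^j$, and integrate over $\mathbb{R}^3$. Using $\rho_t+\div(\rho u)=0$, the left-hand side collapses cleanly to $\tfrac12\tfrac{d}{dt}\!\int\!\rho|\dot u|^2\,dx$ with no residual terms. Splitting the stress tensor into viscous and pressure parts $T^{jk}=T^{jk}_v-\delta_{jk}P$, the contribution from $T^{jk}_v$ yields the principal dissipation $\mu\|\na\dot u\|_{L^2}^2+(\mu+\lambda)\|\div\dot u\|_{L^2}^2$ after integration by parts, together with cubic error terms of schematic form $\int(\na u)^2\na\dot u\,dx$, which are absorbed by $\ep\|\na\dot u\|_{L^2}^2+C\|\na u\|_{L^4}^4$.

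The decisive step is the treatment of the pressure part. A naive computation produces terms involving $\na P$, which cannot yet be controlled; however, repeated integration by parts moves every derivative off $P$ and leaves an integrand of the form $\div\dot u\cdot\bigl(P_t+u\cdot\na P+P\,\div u\bigr)-P\,\partial_l\dot u^j\partial_j u^l$. Invoking the pressure transport equation $\eqref{n2}_3$, i.e.\ $P_t+u\cdot\na P+2P\,\div u=2\mu|\mathfrak{D}(u)|^2+\lambda(\div u)^2=:Q$, we obtain the crucial cancellation
\[
P_t+u\cdot\na P+P\,\div u=-P\,\div u+Q,
\]
so that the pressure contribution reduces to $-\int P\,\div u\,\div\dot u\,dx+\int Q\,\div\dot u\,dx-\int P\,\na u:\na\dot u\,dx$. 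Since $\|P\|_{L^\infty}\le M_0^2$ by \eqref{1.10-1-1} and \eqref{theta}, $\|\na u\|_{L^2}\le C$ by Lemma \ref{lem3.3}, and $\|Q\|_{L^2}\le C\|\na u\|_{L^4}^2$, each term is dominated by $\ep\|\na\dot u\|_{L^2}^2+C+C\|\na u\|_{L^4}^4$. Collecting everything, absorbing the $\ep$ contributions, and writing $f(t):=\int\rho|\dot u|^2\,dx$, the estimate becomes
\[
f'(t)+\|\na\dot u\|_{L^2}^2\le C\bigl(1+\|\na u\|_{L^4}^4\bigr).
\]

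To bound $\|\na u\|_{L^4}^4$ I exploit the Lamé structure \eqref{mom-n}: since $\|\na G\|_{L^2}+\mu\|\na\omega\|_{L^2}\le C\|\sqrt{\rho}\dot u\|_{L^2}$, Sobolev embedding gives $\|G\|_{L^6}+\|\omega\|_{L^6}\le C\|\sqrt{\rho}\dot u\|_{L^2}$, and the div-curl reconstruction together with $\|P\|_{L^6}\le C$ yields $\|\na u\|_{L^6}\le C(1+\|\sqrt{\rho}\dot u\|_{L^2})$. Interpolating between $L^2$ (Lemma \ref{lem3.3}) and $L^6$ produces $\|\na u\|_{L^4}^4\le C(1+f^{3/2})$, so
\[
f'(t)+\|\na\dot u\|_{L^2}^2\le C\bigl(1+f(t)^{3/2}\bigr).
\]

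Finally I close by a nonlinear Gronwall argument. By Lemma \ref{lem3.3} one has $\int_0^{T^\ast}f(s)\,ds\le C$, hence $\int_0^T f^{3/2}\,ds\le(\sup_{[0,T]}f)^{1/2}\int_0^T f\,ds\le C(\sup_{[0,T]}f)^{1/2}$. Integrating in time and taking the supremum gives $\sup_{[0,T]}f+\int_0^T\|\na\dot u\|_{L^2}^2\,ds\le f(0)+CT+C(\sup_{[0,T]}f)^{1/2}$, and Young's inequality yields \eqref{lemm-1}. The initial value $f(0)=\|\sqrt{\rho_0}\dot u(0)\|_{L^2}^2=\|g\|_{L^2}^2$ is finite thanks to the compatibility condition \eqref{n1.1}. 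The main obstacle, as foreshadowed in the introduction, is the pressure integral: without the cancellation enabled by the transport equation for $P$, a term carrying $\na P$ would survive, and no estimate on $\|\na P\|_{L^p}$ is yet available at this stage of the argument.
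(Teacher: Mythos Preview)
Your proof is correct and follows essentially the same approach as the paper: apply $\partial_t+\div(u\,\cdot)$ to the momentum equation, multiply by $\dot u$, extract the dissipation $\mu\|\na\dot u\|_{L^2}^2$ from the viscous part, and handle the pressure contribution by integration by parts together with the transport equation $\eqref{n2}_3$ so that only $P$ (not $\na P$) appears, leaving errors bounded by $C(1+\|\na u\|_{L^4}^4)$. The only cosmetic difference is in the closing step: the paper writes $\|\na u\|_{L^4}^4\le C\|\na u\|_{L^6}^2(1+\|\sqrt{\rho}\dot u\|_{L^2})$ and applies a linear Gronwall with the integrable coefficient $\|\na u\|_{L^6}^2\in L^1(0,T)$ (from Lemma~\ref{lem3.3}), whereas you interpolate to $\|\na u\|_{L^4}^4\le C(1+f^{3/2})$ and close via the nonlinear bootstrap $\int_0^T f^{3/2}\le C(\sup f)^{1/2}$; both routes use the same ingredients from Lemma~\ref{lem3.3} and are equivalent in strength.
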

\begin{proof}
Make use the fact
\be
\dot{f}=f_t +\text{div}(f\otimes u)-f\text{div}u
\ee
 to obtain that
\be
\ba
&\rho\dot{u}_t + \rho u\cdot\na\dot{u} + \na P_t + \mbox{div}(\na P\otimes u)\\
&\quad = \mu[\triangle u_t + \mbox{div}(\triangle u\otimes u)] + (\lambda+\mu)[\nabla \mbox{div} u_t + \mbox{div}((\nabla \mbox{div} u)\otimes u)].
\ea
\ee
Multiplying the above equation by $\dot{u}$ and integrating over $R^3$ show that
\be\label{3.23}
\ba
\f12\f{d}{dt}\int\r|\dot{u}|^2dx =&\int P_t\rm{div}\dot{u} + (u\cdot\na\dot{u})\cdot\nabla Pdx +\m\int\dot{u}\cdot[\Delta{u}_t +\mbox{div}(\Delta{u}\otimes u)]dx \\
&+(\m+\l)\int\dot{u}\cdot[\na\mbox{div}u_t
+\mbox{div}((\na{\rm div}u)\otimes u)]dx= \sum_{i=1}^{3}N_i.
\ea
\ee
First, recalling that
\be
P_{t}+\mbox{div}(P u
)+P\mbox{div}u=2\mu|\mathfrak{D}(u)|^2+\l(\mbox{div}u)^2
\ee

One can get  after integration by parts and
using  the above equation that
\be\la{3.24-1} \ba
N_1 & =  \int   P_t\rm{div}\dot{u} + (u\cdot\na\dot{u})\cdot\nabla Pdx \\&=-\int div\dot{u}div(Pu) -\int Pdiv\dot{u}:div udx\\
& + \int\left(2\mu|\mathfrak{D}(u)|^2+\l(\mbox{div}u)^2\right)
div\dot{u}dx +\int(u\cdot\na\dot{u})\cdot\na Pdx\\
&\le \int P(u\cdot\na)div\dot{u}dx-\int P(u\cdot\na)div\dot{u}dx - \int P\na\dot{u}\na u^tdx\\
&+ C\left(\|\na u\|_{L^2}\|\na\dot{ u}\|_{L^2} +
\|\na u\|_{L^4}^2\|\na\dot{ u}\|_{L^2}\right)\\
&\le C\left(1+\|\na u\|_{L^4}^4\right) + \frac{\mu}{8}\|\na\dot{u}\|_{L^2}^2
\ea
\ee

Integration by parts leads to
\be\label{3.25} \ba N_2 & =  \mu\int
 \dot{u}_j[\pa_t\triangle u_j
 + \div (u\triangle u_j)]dx \\
& = - \mu\int  \left(\p_i\dot{u}_j(\p_iu_j)_t +
\triangle u_ju\cdot\nabla\dot{u}_j\right)dx \\
& = -  \mu\int \left(|\nabla\dot{u}|^2 -
\p_i\dot{u}_ju_k\p_k\p_iu_j - \p_i\dot{u}_j\p_iu_k\p_ku_j +
\triangle u_ju\cdot\nabla\dot{u}_j\right)dx \\
& = - \mu\int  \left(|\nabla\dot{u}|^2 + \p_i\dot{u}_j
\p_iu_j\div u - \p_i\dot{u}_j\p_iu_k\p_ku_j - \p_iu_j\p_iu_k\p_k\dot{u}_j
\right)dx \\
&\le -\frac{7\mu}{8} \int |\nabla\dot{u}|^2dx  + C \int
 |\nabla u|^4dx  . \ea \ee
Similarly,
\begin{eqnarray}\label{3.26}
N_3&\leq&
-\f78(\m+\l)\|\mbox{div}\dot{u}\|_{L^2}^2+C\int|\nabla{u}|^4dx.
\end{eqnarray}

Substituting \eqref{3.24-1}-\eqref{3.26} into \eqref{3.23}, we obtain after choosing $\ve$ suitably small that
\begin{equation}\label{3.22}\ba
& \f{d}{dt}\int\r|\dot{u}|^2dx+\m\|\nabla\dot{u}\|_{L^2}^2 \\&\leq C\int|\nabla{u}|^4dx+C.\ea
\end{equation}
Note that
\be
\ba
\|\na u\|_4^4 & \le \|\na u\|_2\|\na u\|_6^3\\
&\le C\|\na u\|_6^2(1+\|\na G\|_2+\|\na\omega\|_2)\\
&\le C\|\na u\|_6^2(1+\|\r^{\frac{1}{2}}\dot{u}\|_2)
\ea
\ee
Substituting this estimate into (\ref{3.22}) and once again recalling that
\be\la{u6}
\ba
\|\na u\|_6& \le\|G\|_6+\|\omega\|_6 + C\\
&\le C\|\rho\dot{u}\|_{L^2}+C\in L^2(0,T),
\ea
\ee
 we conclude by Gronwall's inequality that
\be
\int\r|\dot{u}|^2dx + \int_0^t\int|\na\dot{u}|^2dxdt\le C.
\ee

\end{proof}

Finally,  the following Lemma  \ref{lem3.7} will deal with the
higher order estimates of the solutions which are needed to
guarantee the extension of local strong solution to be a global one.
\begin{lemma}\label{lem3.7}
Under the condition \eqref{1.10-1-1},  it holds that  for $0\leq
T<T^*$,
\begin{equation}\label{3.54}
\sup_{0\leq t\leq T}(\|(\r,P)\|_{H^1\cap
W^{1, \ti q}}+\|\nabla{u}\|_{H^1}\leq C.
\end{equation}
\end{lemma}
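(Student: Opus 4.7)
The plan is to close a logarithmic Gronwall inequality for the quantity $\Phi(t):=e+\|\nabla\rho\|_{L^2\cap L^{\tilde q}}+\|\nabla P\|_{L^2\cap L^{\tilde q}}$, from which the bound on $\|\nabla u\|_{H^1}$ will follow by elliptic regularity. First, I would differentiate the continuity and pressure equations spatially to obtain transport equations for $\nabla\rho$ and $\nabla P$; multiplying by $|\nabla\rho|^{p-2}\nabla\rho$ (and the analogue for $P$) for $p\in\{2,\tilde q\}$ and integrating by parts yields an inequality of the schematic form
$$\frac{d}{dt}\|\nabla\rho\|_{L^p}\le C(1+\|\nabla u\|_{L^\infty})\|\nabla\rho\|_{L^p}+C\|\nabla^2 u\|_{L^p},$$
and analogously for $\|\nabla P\|_{L^p}$, the latter with the extra forcing $\nabla(2\mu|\mathfrak D(u)|^2+\lambda(\mbox{div}\,u)^2)$ absorbed by Hölder together with $\|\nabla u\|_{L^\infty}$ and the already-controlled $\|\nabla u\|_{L^2}$ from Lemma \ref{lem3.3}.

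Next, elliptic regularity applied to the momentum equation rewritten as $-\mu\Delta u-(\mu+\lambda)\nabla\mbox{div}\,u=-\nabla P-\rho\dot u$ gives $\|\nabla^2 u\|_{L^p}\le C(\|\nabla P\|_{L^p}+\|\rho\dot u\|_{L^p})$. For $p=2$, $\|\rho\dot u\|_{L^2}\le\sqrt{M_0}\|\sqrt\rho\dot u\|_{L^2}$ is uniformly bounded by Lemma \ref{lem-2}. For $p=\tilde q\in(3,6]$, I would interpolate $\|\rho\dot u\|_{L^{\tilde q}}\le C\|\sqrt\rho\dot u\|_{L^2}^{\theta}\|\nabla\dot u\|_{L^2}^{1-\theta}$ with $\theta=(6-\tilde q)/(2\tilde q)$, which together with Lemma \ref{lem-2} places $\|\rho\dot u\|_{L^{\tilde q}}$ in $L^{2/(1-\theta)}(0,T)\subset L^2(0,T)$.

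To control $\|\nabla u\|_{L^\infty}$, I would exploit the effective flux representation $\rho\dot u=\nabla G-\mu\nabla\times\omega$: the elliptic equations $\Delta G=\mbox{div}(\rho\dot u)$ and $\mu\Delta\omega=\nabla\times(\rho\dot u)$ combined with Sobolev embedding give $\|G\|_{L^\infty}+\|\omega\|_{L^\infty}\le C(1+\|\nabla\dot u\|_{L^2})\in L^2(0,T)$, and since $\|\mbox{div}\,u\|_{L^\infty}\le(\|G\|_{L^\infty}+\|P\|_{L^\infty})/(2\mu+\lambda)$ with $\|P\|_{L^\infty}\le M_0$ by \eqref{1.10-1-1}, both $\|\mbox{div}\,u\|_{L^\infty}$ and $\|\omega\|_{L^\infty}$ lie in $L^2(0,T)$. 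Inserting this and the elliptic bound $\|\nabla^2 u\|_{L^{\tilde q}}\le C(\|\nabla P\|_{L^{\tilde q}}+\|\rho\dot u\|_{L^{\tilde q}})$ into the logarithmic inequality of Lemma \ref{lem2.3} produces
$$\|\nabla u\|_{L^\infty}\le Cf(t)\log\bigl(e+\Phi(t)+\|\rho\dot u\|_{L^{\tilde q}}\bigr)+C,\qquad f\in L^2(0,T),$$
and substituting this back into the transport estimates for $\nabla\rho,\nabla P$ yields $\Phi'(t)\le Cf(t)\Phi(t)\log\Phi(t)+Ch(t)\Phi(t)+Ch(t)$ with $f,h\in L^1(0,T)$. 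The standard log-Gronwall argument then gives $\Phi(t)\le C$, and the $H^1$-bound on $\nabla u$ follows from the $p=2$ elliptic estimate.

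The main obstacle is arranging the chain so that the coefficient multiplying $\Phi\log\Phi$ is genuinely in $L^1(0,T)$; this relies crucially on $\nabla\dot u\in L^2(0,T;L^2)$ from Lemma \ref{lem-2} and on converting the natural bound on $\|G\|_{L^\infty}+\|\omega\|_{L^\infty}$ into one on $\|\mbox{div}\,u\|_{L^\infty}+\|\omega\|_{L^\infty}$ via the $L^\infty$-bound on $P$ supplied by the contradiction hypothesis \eqref{1.10-1-1}. Equivalently, the argument leans heavily on the fact that both concentration quantities $\rho$ and $\theta$ (hence $P$) are assumed bounded: without this, the $P$-term in $G$ could not be removed and the log-Gronwall would not close.
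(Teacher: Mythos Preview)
Your proposal is correct and follows the same route as the paper: transport estimates for $\nabla\rho,\nabla P$, elliptic $L^p$-bounds on $\nabla^2 u$ from the momentum equation, the $G$--$\omega$ decomposition to reach $\|\mathrm{div}\,u\|_{L^\infty}+\|\omega\|_{L^\infty}$ (using $\|P\|_{L^\infty}\le C$ from \eqref{1.10-1-1}), then Lemma~\ref{lem2.3} and a log-Gronwall. One slip to fix: the forcing $\nabla\bigl(2\mu|\mathfrak D(u)|^2+\lambda(\mathrm{div}\,u)^2\bigr)\sim\nabla u\cdot\nabla^2 u$ is not controlled by the already-known $\|\nabla u\|_{L^2}$; it contributes a term $C\|\nabla u\|_{L^\infty}\|\nabla^2 u\|_{L^p}$ to the $\nabla P$-inequality (this is exactly the paper's \eqref{pr-1}), and since your own elliptic estimate gives $\|\nabla^2 u\|_{L^p}\le C(1+\|\nabla\dot u\|_{L^2}^{1-\alpha}+\|\nabla P\|_{L^p})$, this extra term feeds back into the same log-Gronwall and the argument still closes.
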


\begin{proof}
 In view of (\ref{u6}) and (\ref{lemm-1}), one has
\be \la{c3.39} \|\na u\|_{L^6}\le C\|\n\dot u\|_{L^2} +C\le C.\ee

For $2\leq p\leq \tilde q<6,$  direct calculations show that
 \be \ba\label{a3.57}
\f{d}{dt}\|\nabla\r\|_{L^p} \leq& C(1+\|\nabla
u\|_{L^\infty})\|\nabla\r\|_{L^p}+C\|\nabla^2 u\|_{L^p}.
\ea\ee
Similarly,
\be\la{pr-1}
\f{d}{dt}\|\nabla P\|_{L^p} \leq C(1+\|\nabla
u\|_{L^\infty})(\|\nabla P\|_{L^p}+\|\na^2 u\|_{L^p})+C\|\nabla^2 u\|_{L^p}.
\ee

Applying the standard $L^p$-estimate to (\ref{mom}) gives
\bnn \|\na G\|_{L^6}+\|\omega\|_{L^6}\le C\|\n\dot u\|_{L^6}\le C+C\|\na\dot u\|_{L^2} ,\enn
which  shows \bnn \|G\|_{L^\infty}\le\|G\|_{L^2}^{\beta}\|\na G\|_{L^6}^{1-\beta}\le C+ C\|\na\dot u\|_{L^2}^{1-\beta}. \enn
for some $\beta\in (0,1)$.

Applying the standard $L^p$-estimate to \eqref{a1}$_2$ leads to \be\ba
 \la{3.59}\|\na^2 u\|_{L^p}  &\le   C\left(\|\n\dot u\|_{L^p}+
  \|\nabla P\|_{L^p}\right)\\ & \le C\left(1+\|\rho\dot{u}\|_2^\alpha\|\n\dot u\|_{L^6}^{1-\alpha}+\|\nabla P\|_{L^p}\right)
  \\ & \le C\left(1+\|\rho\dot{u}\|_2^\alpha\|\na\dot u\|_{L^2}^{1-\alpha}+\|\nabla P\|_{L^p}\right)\\
 & \le   C\left(1+\|\na\dot u\|_{L^2}^{1-\alpha}+\|\nabla P\|_{L^p}\right) \ea\ee
for some $\alpha\in (0,1)$.

This together with Lemma \ref{lem2.3} gives \be\la{3.43} \|\na u\|_{L^\infty}\le C(1+\|\na\dot{u}\|_{L^2}^{1-\beta}) \log \left(e+\|\na\dot{u}\|_{L^2} + \|\na P\|_{L^{\ti q}}  \right)+ C\|\na\dot u\|_{L^2}.\ee

 Substituting \eqref{3.59} and \eqref{3.43}  into \eqref{a3.57}-(\ref{pr-1}) yields that
\begin{equation}\la{a3.60}
f'(t)\leq  Cg(t)f(t)\ln{f(t)},
\end{equation} where
\bnn
 f(t)\triangleq e+\|\nabla\r\|_{L^{\ti q}}+\|\nabla P\|_{L^{\ti q}}, \,\, g(t)\triangleq
1+\|\nabla\dot{u}\|_{L^2}^2 .
\enn
It thus follows from  \eqref{a3.60}, \eqref{lemm-1},  and    Gronwall's inequality that
 \begin{equation}\label{3.64}
\sup_{0\leq t\leq T}\|(\nabla\r,\na P)\|_{L^{\ti q}}\leq C,
\end{equation}
which, combined with \eqref{3.43} and \eqref{lemm-1}
gives directly that
\begin{equation}\label{3.65}
\int_{0}^{T}\|\nabla u\|^2_{L^\infty}dt\leq C.
\end{equation}
Taking $p=2$ in \eqref{a3.57}, one can get by using \eqref{3.65}, \eqref{3.59} and Gronwall's inequality  that
\begin{equation}\label{3.66}
\sup_{0\leq t\leq T}\|(\nabla\r,\na P)\|_{L^2}\leq C,
\end{equation}
which together with  \eqref{3.59} yields that
\begin{eqnarray*}\label{3.67}
\sup_{0\leq t\leq T}\|\nabla^2u\|_{L^2}\leq C\sup_{0\leq t\leq
T}\left(\|\r\dot{u}\|_{L^2}+\|\nabla\r\|_{L^2}
+\|\nabla P\|_{L^2} \right) \leq C.
\end{eqnarray*}
This combined  with \eqref{3.64},  \eqref{3.66}, and
\eqref{3.4} finishes the proof of Lemma \ref{lem3.7}.

\end{proof}

  Now we are in a position to prove Theorem  \ref{thm1.2}.

{\it Proof  of Theorem \ref{thm1.2}.}
Suppose that \eqref{1.10-1-1} holds.
 Note that the generic constant  $C$ in Lemma
 \ref{lem3.7} remains uniformly bounded for all
$T<T^\ast$, so the functions
$(\r,u,P)(x,T^\ast)\triangleq\lim\limits_{t\rightarrow
T^\ast}(\r,u,P)(x,t)$ satisfy the conditions imposed on the initial
data  at the time $t=T^\ast$. Furthermore, standard
arguments yield that $\r\dot{u}\in C([0,T];L^2)$, which
implies \bnn \r\dot{u}(x,T^\ast)=\lim_{t\rightarrow
T^\ast}\r\dot{u}\in L^2. \enn Hence, \bnn
&&-\m\Delta{u}-(\m+\l)\nabla\mbox{div}u+\nabla P
|_{t=T^\ast}=\sqrt{\r}(x,T^\ast)g(x)
\enn with \bnn  g(x)\triangleq
\begin{cases}
\r^{-1/2}(x,T^\ast)(\r\dot{u})(x,T^\ast),&
\mbox{for}~~x\in\{x|\r(x,T^\ast)>0\},\\
0,&\mbox{for}~~x\in\{x|\r(x,T^\ast)=0\},
\end{cases}
\enn
 satisfying $g\in L^2$ due to \eqref{3.54}. Therefore, one can take $(\r,u,P)(x,T^\ast)$ as
the initial data and apply Lemma \ref{lem2.1} to extend the local
strong solution beyond $T^\ast$. This contradicts the assumption on
$T^{\ast}$. We thus finish the proof of Theorem \ref{thm1.2}.

\section{Outline of Theorem \ref{thm1.3}}
The main idea is quite analogous to section 3.
\begin{proof}

{\it Case I.} $\Omega=\mathbf{R}^3$ with non-vacuum far-field.

It's sufficient to note that
   \be\la{cc.2} (\n-\ti\n)_t+\div((\n-\ti\n)u)+ \ti \n\div u=0.\ee
   Multiplying \eqref{cc.2} by $\n-\ti\n$ and integrating the resulting equation over $\om,$ we obtain after using Lemma \ref{lem3.1} that \bnn (\|\n-\ti\n\|_{L^2}^2)'(t)\le C\|\n-\ti\n\|_{L^2}^2+C\|\na u\|_{L^2}^2,\enn
Therefore,
\be
\rho-\tilde\rho\in L^\infty L^2.
\ee
Similarly, one can show
\be
(\rho-\tilde\rho,P-\tilde P)\in L^\infty(0,t;L^1\cap L^\infty).
\ee
Then the remaining proof can be done step by step.\\

{\it Case II.} $\Omega=\mathbf{T}^3$.

We need only to redefine the effective flux $G$ as
\be
G=(2\mu+\lambda)\text{div}u-P + \bar P,
\ee
where
\be
\bar P=\frac{1}{|\mathbf{T}^3|}\int Pdx.
\ee
For any $q\in [1,\infty)$ and under the condition \eqref{1.10-1-1}, $G$ satisfies
\be
\|G\|_{L^q}\le C\|\na u\|_{L^q} + \|P\|_{L^q} + C
\ee
and
\be
\|\na G\|_{L^q} + \|\omega\|_{L^q}\le C\|\rho\dot{u}\|_{L^q}.
\ee
Then the higher regularity of the density, velocities and temperature can be obtained without difficulty.\\

{\it Case III.} $\Omega$ is a bounded domain.

Since there is no boundary condition for effective viscous flux. We will outline the proof of Lemma \ref{lem3.3}.

Motivated by \cite{wz1}, we decompose the velocity into two parts. It follows from Lemma \ref{lem2.3} that for any $t\in [0,T], $ there exists a unique  $v(t,\cdot) \in H^1_0\cap  W^{2,\ti q}$ satisfying
\be\label{lame}
 \mu\triangle v  + (\mu+\lambda)\nabla\mbox{div}v  = \nabla P  ,
\ee
which together with \eqref{b6}  yields that
\be \label{3.n3}\|\na v \|_{L^p}\le C\|P \|_{L^p}\le C, \mbox{ for } p\in [2,6],\,\, t\in [0,T],\ee and that
\be \label{3.n1} \ba
-\int P_t {\rm div} v dx
& = -  \int (\mu\na v_t\cdot\na v + (\mu+\lambda) \mbox{div}v_t\div v) dx \\
& =- \frac12 \frac{d}{dt} \int \left(\mu | \na  v |^2 +(\mu+\lambda)(\div v)^2\right)dx.
\ea \ee
Denoting by \be\la{dw} w\triangleq u-v,\ee   we have  $w \in H^1_0\cap W^{2,\ti q},$  for a.e. $t\in [0,T].$ Moreover, for a.e. $t\in [0,T],$ $w$  satisfies \be\la{wl1}
 \mu\triangle w + (\mu+\lambda)\nabla\mbox{div}w = \n \dot u,
\ee
which together with the standard $L^2$-estimate for elliptic system  gives
\be \label{3.n2}
\|\na w\|_{L^6}+\|\na^2 w\|_{L^2}\le C \|\n \dot u\|_{L^2}.
\ee

 It follows from \eqref{energy} and \eqref{en-1} that
\be \label{3.m1-1} \ba
   -\int P_t  {\rm div} w dx    & =- \int (\rho E )_t {\rm div} w dx+  \int (\rho |u|^2)_t {\rm div} w dx   \\
 & \le C   \int \left( \rho E  |u| +|u||\na u|\right) |\na^2w| dx\\
 &\quad   - \frac{1}{2}\int\left( {\rm div} (\rho u) |u|^2 {\rm div} w         - 2 \rho u\cdot u_t   {\rm div} w \right)dx   =\sum\limits_{i=1}^3J_i.
\ea \ee
In fact, \eqref{3.m1-1} has a same structure of \eqref{3.m1}. Here $\text{div}\omega$ plays a same role as $G$ in \eqref{3.m1}.

We then finish the proof of Theorem \ref{thm1.3} for bounded domain by adapting a same procedure as Theorem \ref{thm1.2} with the help of Lemma \ref{lem2.3}.

\end{proof}

{\bf Acknowledgment}. \  X.D. Huang is supported  by
President Fund of Academy of Mathematics Systems Science, CAS, No.2014-cjrwlzx-hxd and National Natural Science Foundation of China, No. 11471321 and 11371064. Zhouping Xin is partially supported by Zheng Ge Ru Foundation, Hong Kong RGC Earmarked Grants CUHK4041/11P and CUHK4048/13P, NSFC/RGC Joint Research Scheme Grant N-CUHK443/14, a grant from Croucher Foundation, and a Focus Area Grant at The Chinese University of Hong Kong.

\end{document}